\providecommand{\tabularnewline}{\\}
\theoremstyle{plain}
\newtheorem{thm}{\protect\theoremname}
  \theoremstyle{plain}
  \newtheorem{cor}[thm]{\protect\corollaryname}
  \theoremstyle{plain}
  \newtheorem{lem}[thm]{\protect\lemmaname}
  \theoremstyle{plain}
  \newtheorem{prop}[thm]{\protect\propositionname}
  \theoremstyle{plain}
  \newtheorem{conjecture}[thm]{\protect\conjecturename}
  \theoremstyle{definition}
  \newtheorem{defn}[thm]{\protect\definitionname}
  \theoremstyle{definition}
  \newtheorem{example}[thm]{\protect\examplename}
  \theoremstyle{remark}
  \newtheorem{rem}[thm]{\protect\remarkname}
  \theoremstyle{remark}
  \newtheorem{claim}[thm]{\protect\claimname}
  \theoremstyle{plain}
  \newtheorem{fact}[thm]{\protect\factname}
  \theoremstyle{remark}
  \newtheorem*{rem*}{\protect\remarkname}
\date{}
\crefname{appsec}{Appendix}{Appendices}
\theoremstyle:=definition,remark,plain\do{%
        \expandafter\g@addto@macro\csname th@\theoremstyle\endcsname{%
            \addtolength\thm@preskip\parskip
            }%
        }
\theoremstyle{plain}
\newtheorem{mythm}{\protect\theoremname}
\renewenvironment{thm}{\begin{mythm}}{\end{mythm}}
\crefname{mythm}{Theorem}{Theorems}
\theoremstyle{definition}
\newtheorem{mydefn}[mythm]{\protect\definitionname}
\renewenvironment{defn}{\begin{mydefn}}{\end{mydefn}}
\theoremstyle{definition}
\newtheorem{myfact}[mythm]{\protect\factname}
\renewenvironment{fact}{\begin{myfact}}{\end{myfact}}
\theoremstyle{definition}
\newtheorem{myexample}[mythm]{\protect\examplename}
\renewenvironment{example}{\begin{myexample}}{\end{myexample}}
\theoremstyle{plain}
\newtheorem{myprop}[mythm]{\protect\propositionname}
\renewenvironment{prop}{\begin{myprop}}{\end{myprop}}
\crefname{myprop}{Proposition}{Propositions}
\theoremstyle{plain}
\newtheorem{mycor}[mythm]{\protect\corollaryname}
\renewenvironment{cor}{\begin{mycor}}{\end{mycor}}
\theoremstyle{plain}
\newtheorem{mylem}[mythm]{\protect\lemmaname}
\renewenvironment{lem}{\begin{mylem}}{\end{mylem}}
\crefname{mylem}{Lemma}{Lemmas}
\theoremstyle{plain}
\newtheorem{myconjecture}{\protect\conjecturename}
\renewenvironment{conjecture}{\begin{myconjecture}}{\end{myconjecture}}
\theoremstyle{definition}
\newtheorem{myrem}[mythm]{\protect\remarkname}
\renewenvironment{rem}{\begin{myrem}}{\end{myrem}}
\theoremstyle{remark}
\newtheorem{myclaim}[mythm]{\protect\claimname}
\renewenvironment{claim}{\begin{myclaim}}{\end{myclaim}}
\let\originalleft\left
\let\originalright\right
\renewcommand{\left}{\mathopen{}\mathclose\bgroup\originalleft}
\renewcommand{\right}{\aftergroup\egroup\originalright}
\renewcommand*{\UrlTildeSpecial}{%
  \do\~{%
    \mbox{%
      \fontfamily{ptm}\selectfont
      \textasciitilde
    }%
  }%
}%
\let\Url@force@Tilde\UrlTildeSpecial
\tikzstyle{vertex}=[circle,draw=black,fill=black,inner sep=0,minimum size=0.2cm,text=white,font=\footnotesize]
\tikzset{every loop/.style={min distance=50,in=50,out=130,looseness=7}}
  \providecommand{\claimname}{Claim}
  \providecommand{\conjecturename}{Conjecture}
  \providecommand{\corollaryname}{Corollary}
  \providecommand{\definitionname}{Definition}
  \providecommand{\examplename}{Example}
  \providecommand{\factname}{Fact}
  \providecommand{\lemmaname}{Lemma}
  \providecommand{\propositionname}{Proposition}
  \providecommand{\remarkname}{Remark}
\providecommand{\theoremname}{Theorem}
\begin{document}

\title{Intercalates and Discrepancy in Random Latin Squares}

\author{Matthew Kwan \thanks{Department of Mathematics, ETH, 8092 Zürich. Email: \href{mailto:matthew.kwan@math.ethz.ch} {\nolinkurl{matthew.kwan@math.ethz.ch}}.}\and
Benny Sudakov\thanks{Department of Mathematics, ETH, 8092 Zürich. Email: \href{mailto:benjamin.sudakov@math.ethz.ch}{\nolinkurl{benjamin.sudakov@math.ethz.ch}}.
Research supported in part by SNSF grant 200021-149111.}}

\maketitle
\global\long\def\RR{\mathbb{R}}

\global\long\def\QQ{\mathbb{Q}}

\global\long\def\HH{\mathbb{H}}

\global\long\def\E{\mathbb{E}}

\global\long\def\Var{\operatorname{Var}}

\global\long\def\CC{\mathbb{C}}

\global\long\def\NN{\mathbb{N}}

\global\long\def\ZZ{\mathbb{Z}}

\global\long\def\GG{\mathbb{G}}

\global\long\def\BB{\mathbb{B}}

\global\long\def\DD{\mathbb{D}}

\global\long\def\cL{\mathcal{L}}

\global\long\def\supp{\operatorname{supp}}

\global\long\def\one{\boldsymbol{1}}

\global\long\def\range#1{\left[#1\right]}

\global\long\def\d{\operatorname{d}}

\global\long\def\falling#1#2{\left(#1\right)_{#2}}

\global\long\def\f{\mathbf{f}}

\global\long\def\im{\operatorname{im}}

\global\long\def\sp{\operatorname{span}}

\global\long\def\rank{\operatorname{rank}}

\global\long\def\sign{\operatorname{sign}}

\global\long\def\mod{\operatorname{mod}}

\global\long\def\id{\operatorname{id}}

\global\long\def\disc{\operatorname{disc}}

\global\long\def\lindisc{\operatorname{lindisc}}

\global\long\def\tr{\operatorname{tr}}

\global\long\def\adj{\operatorname{adj}}

\global\long\def\Unif{\operatorname{Unif}}

\global\long\def\Po{\operatorname{Po}}

\global\long\def\Bin{\operatorname{Bin}}

\global\long\def\Ber{\operatorname{Ber}}

\global\long\def\Geom{\operatorname{Geom}}

\global\long\def\sat{\operatorname{sat}}

\global\long\def\Hom{\operatorname{Hom}}

\global\long\def\vol{\operatorname{vol}}

\global\long\def\floor#1{\left\lfloor #1\right\rfloor }

\global\long\def\ceil#1{\left\lceil #1\right\rceil }

\global\long\def\cond{\,\middle|\,}

\let\polishL\L


\DeclareRobustCommand{\L}{\ifmmode{\mathcal{L}}\else\polishL\fi}

\global\long\def\randL{\boldsymbol{L}}

\global\long\def\randN{\boldsymbol{N}}

\global\long\def\randNt{\boldsymbol{N_{2}}}

\global\long\def\randNk{\boldsymbol{N}_{\!k}}

\begin{abstract}
An \emph{intercalate} in a Latin square is a $2\times2$ Latin subsquare.
Let $\randN$ be the number of intercalates in a uniformly random
$n\times n$ Latin square. We prove that asymptotically almost surely
$\randN\ge\left(1-o\left(1\right)\right)\,n^{2}/4$, and that $\E\randN\le\left(1+o\left(1\right)\right)\,n^{2}/2$
(therefore asymptotically almost surely $\randN\le fn^{2}$ for any
$f\to\infty$). This significantly improves the previous best lower
and upper bounds. We also give an upper tail bound for the number
of intercalates in two fixed rows of a random Latin square. In addition,
we discuss a problem of Linial and Luria on low-discrepancy Latin
squares.
\end{abstract}

\section{Introduction}

An $n\times n$ \emph{Latin square} is an $n\times n$ array of the
numbers between $1$ and $n$ (we call these \emph{symbols}), such
that each row and column contains each symbol exactly once. Latin
squares are a fundamental type of combinatorial design, and have many
essentially equivalent formulations. In their various guises, Latin
squares play an important role in many contexts, ranging from group
theory, to projective geometry, to experimental design, to the theory
of error-correcting codes. An introduction to the vast subject of
Latin squares can be found in \cite{DK15}.

Since Erd\H os and R\'enyi's seminal paper on random graphs \cite{ER59}
and Erd\H os' popularization of the probabilistic method, there has
been great interest in random combinatorial structures of all kinds,
and of course it is natural to consider random Latin squares. In fact
random Latin squares are of more than theoretical interest, due to
the importance of randomization in experimental design (see for example
\cite{DK91}).

The simplest and most natural notion of a random Latin square is the
uniform probability distribution over the set $\L$ of $n\times n$
Latin squares. Random Latin squares are very difficult to study: they
lack independence or any kind of recursive structure, which rules
out many of the techniques used to study binomial random graphs and
random permutations, and there is little freedom to make local changes,
which limits the use of ``switching'' techniques often used in the
study of random regular graphs (see for example \cite{KSVW01}). It
is not even known how to efficiently generate a uniformly random $\randL\in\L$.
Jacobson and Matthews \cite{JM96} and Pittenger \cite{Pitt97} designed
Markov chains on $\L$ which converge to the uniform distribution,
but it is not known if these Markov chains converge rapidly.

Some of the earlier work on random Latin squares concerned algebraic
properties (see for example \cite{Cam92,HJ96}). In this paper we
are more interested in structural questions. An \emph{intercalate}
in a Latin square $L$ is a $2\times2$ Latin subsquare. That is,
it is a pair of rows $i,j$ and a pair of columns $x,y$ such that
$L_{i,x}=L_{j,y}$ and $L_{i,y}=L_{j,x}$. An important statistic
of a Latin square $L$ is the number $N\left(L\right)$ of intercalates
that it contains. Clearly this number is at most $n^{3}/4$, because
each of the $n^{2}$ entries in a Latin square can be involved in
at most $n$ intercalates. Heinrich and Wallis \cite{HW81} proved
that for all $n$ there exist $n\times n$ Latin squares with $\Omega\left(n^{3}\right)$
intercalates (the current best lower bound is $n^{3}/8+O\left(n^{2}\right)$
due to Bartlett \cite{Bar13} and independently Browning, Cameron
and Wanless \cite{BCW14}). In the other direction, in a series of
papers due to Kotzig, Lindner, McLeish, Rosa and Turgeon \cite{KLR75,McL75,KT76},
it was proved that for all orders except $2\times2$ and $4\times4$
there exist Latin squares with no intercalates.

In \cite{MW99} McKay and Wanless conjectured the following.
\begin{conjecture}
\label{conj:mckay-wanless}Let $\randN=N\left(\randL\right)$ be the
number of intercalates in a uniformly random Latin square $\randL\in\L$.
For any fixed $\varepsilon>0$, a.a.s.\footnote{By ``asymptotically almost surely'', or ``a.a.s.'', we mean that
the probability of an event is $1-o\left(1\right)$. Here and for
the rest of the paper, asymptotics are as $n\to\infty$.}
\[
\left(1-\varepsilon\right)\frac{n^{2}}{4}\le\randN\le\left(1+\varepsilon\right)\frac{n^{2}}{4}.
\]
\end{conjecture}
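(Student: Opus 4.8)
The plan is to deduce the conjecture from two ingredients: a \emph{sharp} first-moment estimate $\E\randN=(1+o(1))\,n^{2}/4$, and an upper-tail bound $\Pr\!\left[\randN\ge(1+\varepsilon)\,n^{2}/4\right]=o(1)$. Combined with the lower-tail bound $\randN\ge(1-\varepsilon)\,n^{2}/4$ a.a.s.\ — one of the results announced in the abstract, which may simply be quoted — these give exactly the two-sided estimate of \cref{conj:mckay-wanless}.

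\textbf{First moment.} Write $\randN=\sum_{i<j}N_{ij}$, where $N_{ij}$ counts the intercalates using rows $i$ and $j$. For fixed $i<j$, let $\pi_{ij}$ be the permutation of $\range n$ with $\pi_{ij}(x)=y$ whenever $\randL_{i,x}=\randL_{j,y}$; since $\randL_{i,x}\ne\randL_{j,x}$ this permutation is a derangement, and an intercalate in rows $i,j$ is precisely a $2$-cycle of $\pi_{ij}$, so $N_{ij}$ equals the number of $2$-cycles of $\pi_{ij}$. The heart of the first-moment estimate is thus to show that the difference permutation of two rows of a uniformly random Latin square behaves, at the level of a single transposition, like a uniform random derangement: for $x\ne y$, $\Pr\!\left[\pi_{ij}(x)=y,\ \pi_{ij}(y)=x\right]=(1+o(1))/(n(n-1))$. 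I would establish this by permanent-based enumeration — the number of $n\times n$ Latin squares is $\bigl((1+o(1))\,n/e^{2}\bigr)^{n^{2}}$ by the classical permanent bounds (Egorychev--Falikman and Bregman--Minc), as used by Linial and Luria, and the number of Latin squares containing a prescribed intercalate in a fixed pair of cells can be estimated by the same machinery applied to the residual completion problem. Summing over the $\binom n2^{2}$ potential locations gives $\E N_{ij}=1/2+o(1)$ and hence $\E\randN=\binom n2(1/2+o(1))=(1+o(1))\,n^{2}/4$, sharpening the bound $\E\randN\le(1+o(1))\,n^{2}/2$ of the abstract by the required factor of two.

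\textbf{Upper tail.} This is the main obstacle. With the sharp first moment in hand, Markov's inequality alone only yields $\randN\le\omega(1)\cdot n^{2}/4$, so a genuine deviation bound is needed. I would attempt this via the second moment: $\Var(\randN)=\sum_{i<j}\Var(N_{ij})+\sum\mathrm{Cov}(N_{ij},N_{kl})$. Each $N_{ij}$ has bounded moments, so the diagonal contributes $O(n^{2})=o\bigl((n^{2}/4)^{2}\bigr)$; the $O(n^{3})$ pairs $\{i,j\},\{k,l\}$ sharing a row contribute $O(1)$ each, again negligible. Everything therefore rests on showing that, summed over disjoint row-pairs, $\mathrm{Cov}(N_{ij},N_{kl})=o(n^{4})$. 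Here I would use the upper-tail bound for intercalates in two fixed rows announced in the abstract, together with a decoupling argument — ideally an approximate negative-correlation statement — quantifying how weakly two disjoint row-pairs of a random Latin square interact. Chebyshev's inequality then yields the required upper-tail bound.

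\textbf{Main difficulty.} Two steps are substantial. Closing the factor-two gap in the first moment requires controlling $\Pr[\pi_{ij}(x)=y,\ \pi_{ij}(y)=x]$ \emph{sharply from above}, precisely what the cruder arguments behind the $n^{2}/2$ bound fail to do; I expect the permanent machinery to suffice, but delicately. The more serious obstacle is the concentration: proving $\sum\mathrm{Cov}(N_{ij},N_{kl})=o(n^{4})$ over disjoint row-pairs. Because the entries of a random Latin square satisfy global row and column constraints, even ``far apart'' row-pairs are genuinely dependent, and — as the introduction emphasises — the absence of recursive structure and of usable local switching moves rules out off-the-shelf concentration inequalities. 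I expect this step to require a genuinely new idea, most plausibly a careful combinatorial argument showing that intercalate counts in disjoint row-pairs are approximately negatively correlated.
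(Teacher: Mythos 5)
The statement you have been asked to prove is a \emph{conjecture}, and the paper does not prove it. The paper proves only the lower bound $\randN\ge(1-\varepsilon)n^{2}/4$ a.a.s., together with the weaker assertions $\E\randN\le(1+o(1))n^{2}/2$ and $\randN\le fn^{2}$ a.a.s.\ for any $f\to\infty$ (\cref{thm:upper+lower}). The concluding remarks explicitly single out the upper bound $(1+\varepsilon)n^{2}/4$ as open, and describe a concrete obstacle: the approach that works for the lower bound would require a corresponding very-high-probability upper bound on the intercalate count of a random $k\times n$ Latin rectangle, and the switching arguments from \cite{MW99} only give this conditional on avoiding certain ``problematic configurations'' whose rarity has not been established. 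So there is no proof in the paper against which to compare yours; you are proposing an attack on an open problem.

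As an attack, your plan has a fundamental gap in the first step. You want to show $\E\randN=(1+o(1))\,n^{2}/4$ by estimating, via Van der Waerden/Egorychev--Falikman and Br\'egman, the number of Latin squares containing a prescribed intercalate. But these permanent bounds only pin down $\left|\L\right|$ (and the residual completion counts) up to a multiplicative factor of order $e^{\Theta(n\log^{2}n)}$ --- this is exactly the error that appears in \cref{prop:extend-to-latin-squares} and \cref{prop:extend-graphs-to-latin-squares}, and the concluding remarks point out that any improvement in the discrepancy result must escape precisely this error. To get $\E N_{ij}=1/2+o(1)$ you need the ratio between the number of Latin squares with and without a fixed intercalate pinned to $1+o(1)$; the permanent machinery is off by a factor that is superexponential in $n$. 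So this step, as written, does not close the factor-two gap. (Note that the paper's own bound $\E\randNt\le1+o(1)$ is obtained by a switching argument, not by permanents, and the authors explicitly say they doubt it is sharp.) The second-moment step is also unsupported: you correctly identify that everything rests on $\sum\mathrm{Cov}(N_{ij},N_{kl})=o(n^{4})$ over disjoint row-pairs, but you offer only the hope of a ``decoupling argument'' or approximate negative correlation. Nothing in the paper gives such a statement, and the introduction warns that random Latin squares lack the independence and local-switching flexibility that usually produce such correlation control. Your honest assessment that ``I expect this step to require a genuinely new idea'' is the correct one; as it stands the proposal is a sketch of a programme, not a proof, and the paper itself confirms that both of the steps you identify as ``main difficulties'' are unresolved.
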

They were able to prove the substantially weaker lower bound that
a.a.s. $\randN\ge n^{3/2-\varepsilon}$ for any $\varepsilon>0$.
Before our paper, the best upper bound was due to Cavenagh, Greenhill
and Wanless \cite{CGW08}, who proved that a.a.s. $\randN\le\left(9/2\right)n^{5/2}$.
The techniques used for these upper and lower bounds are very different,
and we incorporate both to prove the following improved bounds. In
particular we are able to prove the lower bound in \ref{conj:mckay-wanless}.

\begin{thm}
\label{thm:upper+lower}Let $\randN=N\left(\randL\right)$ be the
number of intercalates in a uniformly random Latin square $\randL\in\L$.
First,
\[
\left(1-o\left(1\right)\right)\frac{n^{2}}{4}\le\E\randN\le\left(1+o\left(1\right)\right)\frac{n^{2}}{2}.
\]
Second, for any fixed $\varepsilon>0$ and any function $f\to\infty$,
a.a.s.
\[
\left(1-\varepsilon\right)\frac{n^{2}}{4}\le\randN\le fn^{2}.
\]
\end{thm}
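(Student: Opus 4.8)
The plan is to prove four separate estimates: (i) $\E\randN \ge (1-o(1))n^2/4$, (ii) $\E\randN \le (1+o(1))n^2/2$, (iii) the a.a.s. lower bound $\randN \ge (1-\varepsilon)n^2/4$, and (iv) the a.a.s. upper bound $\randN \le fn^2$. The easiest is (i): I would compute, for each fixed pair of rows $i,j$ and pair of columns $x,y$, the probability that they form an intercalate. Conditioning on the $i$-th row (which is a uniform permutation), the $j$-th row is close to a uniform derangement relative to it, and the probability that $L_{j,x}=L_{i,y}$ \emph{and} $L_{j,y}=L_{i,x}$ is asymptotically $1/n^2$ (two nearly independent events each of probability $\sim 1/n$). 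Summing over the $\binom n2 ^2$ choices of rows and columns and dividing by $2$ (each intercalate is counted once per unordered pair-of-rows and pair-of-columns, but with a symbol-ordering factor) gives $(1-o(1))n^2/4$. Getting these conditional probabilities rigorous requires a hypergeometric/permanent estimate for the number of Latin squares with a prescribed partial structure, which should follow from known enumeration results (e.g.\ van der Waerden-type permanent bounds, or the Brégman/Egorychev–Falikman machinery, as used in \cite{CGW08,MW99}).

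For the a.a.s.\ upper bound (iv), the point is that $\E\randN = O(n^2)$ and $\randN \ge 0$, so Markov's inequality immediately gives $\randN \le fn^2$ a.a.s.\ for any $f\to\infty$. This is why part (iv) is stated with the weak bound $fn^2$ rather than $(1+\varepsilon)n^2/4$: concentration of $\randN$ (which would be needed for the sharp a.a.s.\ upper bound, and is the content of the full McKay–Wanless conjecture) is not claimed here. So (iv) is essentially free once (ii) is in hand.

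The hard part is the a.a.s.\ lower bound (iii), which must go beyond the first-moment computation. The obstacle is that $\randN$ a priori could be small on a non-negligible fraction of Latin squares while being large on a few, so we need some control of the lower tail. The approach I would take is a second-moment / concentration argument: show $\Var \randN = o\left((\E\randN)^2\right)$, or more robustly, use a martingale/Azuma-type exposure argument or a switching argument to show $\randN$ does not drop much below its mean. Computing $\E[\randN^2]$ requires understanding, for two (possibly overlapping) potential intercalates, the joint probability that both are realised — again reducible to counting Latin squares with two prescribed $2\times2$ patterns, via the same permanent-estimate toolbox, with a careful case analysis according to how the two quadruples of rows/columns/symbols intersect. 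The dominant contribution should come from disjoint pairs and factor as $(\E\randN)^2(1+o(1))$, with the overlapping cases contributing lower-order terms. Alternatively, if a clean variance bound is out of reach, one can define an auxiliary random variable counting intercalates only among a fixed linear-sized set of rows and columns where independence is easier to exploit, and transfer the bound; the McKay–Wanless lower bound of $n^{3/2-\varepsilon}$ already used a related idea, and the improvement here presumably comes from a sharper analysis of the dependency structure (this is where I expect the real work, and the borrowed techniques from \cite{CGW08}, to be concentrated).

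Finally, for (ii), the upper bound on $\E\randN$, the factor $2$ loss (relative to the conjectured $n^2/4$) signals that the intercalate-probability estimate is only proven up to a constant factor in this direction — presumably because the lower bound on the relevant permanent/count is tight but the upper bound overcounts by a bounded factor. I would establish $\Pr[\text{fixed }2\times2\text{ pattern is an intercalate}] \le (1+o(1))/n^2$ by an entropy or permanent-upper-bound argument (Brégman's theorem) on the number of completions, then sum; the mismatch with part (i) comes from the two defining events $L_{j,x}=L_{i,y}$ and $L_{j,y}=L_{i,x}$ being positively correlated in a way that is easy to bound from below but whose exact order is harder to pin from above. Closing this factor-$2$ gap is exactly what the remaining open part of \ref{conj:mckay-wanless} would require, so it is left as is.
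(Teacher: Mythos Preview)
Your proposal diverges from the paper's approach in essentially every nontrivial part, and in two places there are genuine gaps.

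For (ii), the paper does \emph{not} bound the per-pattern intercalate probability via Br\'egman-type permanent estimates. Permanent bounds are far too crude to extract a sharp constant: they lose a factor of $e^{O(n\log^2 n)}$ (see \ref{prop:extend-to-latin-squares}), which is useless for a first-moment computation. Instead the paper defines a switching operation (``turn'' and ``flip'', combined into a ``join'') on the set of Latin squares, which merges an intercalate in the first two rows into a longer cycle of the row-permutation $\sigma_{1,2}$. Counting joins in both directions gives $|\mathcal{L}(s+1)|/|\mathcal{L}(s)| \le (1+o(1))/(s+1)$, from which $\E\randNt \le 1+o(1)$ and the tail bound $\Pr(\randNt \ge t) = e^{-\Omega(t\log t)}$ both follow. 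The factor of $2$ in $\E\randN \le (1+o(1))n^2/2$ arises because this switching ratio is only known to be $\le (1+o(1))/(s+1)$ rather than $\le (1+o(1))/(2(s+1))$; it has nothing to do with positive correlation between the two defining events.

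For (iii), your second-moment plan is the real gap. Computing $\E[\randN^2]$ would require the joint probability that two prescribed $2\times 2$ patterns are simultaneously intercalates, and there is no known way to estimate such probabilities in a uniform Latin square to within a $1+o(1)$ factor; the same permanent-bound obstruction applies. The paper's route is completely different: it proves a lower-tail bound for the number of intercalates in a random $k\times n$ Latin \emph{rectangle} (via a separate ``twist'' switching, \ref{lem:lower-bound-rectangles}), transfers this to the first $k$ rows of a random Latin square using the permanent-ratio estimate \ref{prop:extend-to-latin-squares} (which is usable here precisely because the rectangle tail bound beats the $e^{O(n\log^2 n)}$ loss), and then covers all row-pairs with many such $k$-row blocks. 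Crucially, the rectangle switching requires conditioning on no row being in too many intercalates, and \emph{this} is where the two-row tail bound from the proof of (ii) is reused as an input. So (ii) and (iii) are not independent modules as in your plan; the switching machinery for (ii) feeds directly into (iii).

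Finally, the paper derives (i) as a trivial consequence of (iii) (since $\randN \ge 0$), not by a separate first-moment computation.
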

\ref{thm:upper+lower} is an immediate corollary of two theorems that
may be of independent interest, which we discuss in \ref{sec:upper+lower}.

A different property that likely holds a.a.s. for random Latin squares
is that they have ``low discrepancy'' or are ``quasirandom'' in
a certain sense. This is related to a conjecture by Linial and Luria
\cite{LL15}. To state their conjecture, note that $\L$ can more
symmetrically be interpreted as the set of all $n\times n\times n$
zero-one arrays with a single ``1'' in each axis-aligned line. To
be specific, an $n\times n$ Latin square $L$ corresponds to the
$n\times n\times n$ array $A=A\left(L\right)$ where $A_{i,x,q}=1$
if $L_{i,x}=q$. A \emph{box} is a set of the form $T=I\times X\times Q$,
where $I,X,Q\subseteq\range n$. For a box $T$, define its \emph{volume
}$\vol T=\left|I\right|\left|X\right|\left|Q\right|$. Let $N_{T}\left(L\right)$
be the number of ones in $A\left(L\right)$ in the positions in the
box $T$. Linial and Luria's conjecture is as follows.
\begin{conjecture}
\label{conj:linial-luria}There exist arbitrarily large Latin squares
$L$ with the following property. For any box $T=I\times X\times Q$,
\[
\left|N_{T}\left(L\right)-\frac{\vol T}{n}\right|=O\left(\sqrt{\vol T}\right).
\]
\end{conjecture}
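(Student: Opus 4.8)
The plan is to establish the stronger fact that a uniformly random Latin square $\randL\in\L$ has the stated property a.a.s.; since the conjecture only asks for one such $L$ for infinitely many $n$, this suffices. Fix a box $T=I\times X\times Q$ and write $a=|I|$, $b=|X|$, $c=|Q|$, so $\vol T=abc$. Since a single entry of $\randL$ is uniform on $\range n$, we have $\E N_T(\randL)=abc/n=\vol T/n$ exactly, so the whole problem reduces to a large-deviation estimate: it is enough to bound
\[
\Pr\!\left[\,\left|N_T(\randL)-\frac{\vol T}{n}\right|>C\sqrt{\vol T}\,\right]
\]
well enough to survive a union bound over all boxes. There are at most $(2^n)^3=8^n$ boxes, and at most $\binom na\binom nb\binom nc$ with given side lengths, so one needs a tail of order $\exp(-\Omega(n))$ for boxes of volume $\gtrsim n^2$ and of order $\exp(-\Omega((a+b+c)\log n))$ for smaller ones. (In fact, thanks to the earlier $O(n)$ discrepancy bound of Linial and Luria, only boxes of volume $o(n^2)$ are at issue; by the threefold row/column/symbol symmetry we may assume $a\le b\le c$, and then $abc<n^2$ forces $a<n^{2/3}$.)

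The core step is a concentration inequality for $N_T$ at this scale. I would condition on the restriction $R$ of $\randL$ to the rows $I$, which is a uniformly random $a\times n$ Latin rectangle, so that $N_T=\sum_{x\in X}\bigl|\{i\in I:R_{i,x}\in Q\}\bigr|$, a quantity lying deterministically in $[0,b\min(a,c)]$. When $a$ is small the rows of $R$ are $o(1)$-close in total variation to $a$ independent uniform permutations, so $N_T$ behaves like a sum of $a$ independent hypergeometric-type counts, each with mean $bc/n$: such a sum is subgaussian with variance proxy $O(\vol T/n)$ centrally and has a Poisson-type upper tail further out, which yields $\Pr[\,|N_T-\vol T/n|>C\sqrt{\vol T}\,]\le\exp(-c'C\sqrt{\vol T})$ once one checks this event really is "in the tail." Combined with the deterministic bound for boxes where $b\min(a,c)$ is already $O(\sqrt{\vol T})$, and a simple moment bound for the $O(1)$-volume boxes, this beats the union bound in the small-$a$ range.

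The hard part is the intermediate range of $a$ — roughly $n^{\varepsilon}\le a<n^{2/3}$ — where the rows of $R$ are no longer close to independent permutations, yet the target error $O(\sqrt{\vol T})$ (which can be as small as $a^{3/2}\ll n$) is far tighter than what the trivial bound $N_T\le b\min(a,c)$ gives. A crude Azuma argument exposing the rows of $R$ one at a time controls the increments only by $O(\min(b,c))$, giving an exponent of order $\vol T/(a\min(b,c)^2)$ that can degrade to a constant, so that is too weak. To handle this regime one would instead count completions directly: conditioned on $R$, the number of extensions to a full Latin square is controlled by a product of permanents of column availability matrices, and — as in Linial and Luria's work — combining the van der Waerden (Egorychev--Falikman) lower bound with Br\'egman/Schrijver-type upper bounds should let one estimate how the completion count depends on $N_T$ and extract Gaussian-order concentration. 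Pushing the permanent estimates to yield an error of exactly $O(\sqrt{\vol T})$, rather than the $O(n)$ already available, is where essentially all the difficulty lies; I would not be surprised if the conjecture as stated remains out of reach by these methods, with only a partial result — say with an extra polylogarithmic factor, or for boxes that are not too thin — being attainable.
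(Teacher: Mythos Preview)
The statement you are attempting to prove is \emph{Conjecture}~\ref{conj:linial-luria}, and the paper does not prove it: it is stated explicitly as an open problem of Linial and Luria. What the paper actually establishes is the weaker \ref{thm:weak-discrepancy}, namely
\[
\left|N_{T}(\randL)-\frac{\vol T}{n}\right|=O\!\left(\sqrt{\vol T}\,\log n + n\log^{2}n\right)
\]
a.a.s., and in the concluding remarks the authors single out removing these extra terms as an open problem whose solution would have to bypass the permanent-based error in \ref{prop:extend-graphs-to-latin-squares}. So there is no ``paper's proof'' to compare against.

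Your proposal is, appropriately, not a proof either --- you concede in the final paragraph that the intermediate regime is where ``essentially all the difficulty lies'' and that the conjecture ``remains out of reach by these methods.'' That assessment is accurate and matches the paper's own position. Two remarks on the specifics: first, the paper's route to the weaker bound is not via conditioning on the rows in $I$ as you suggest, but via the bipartite $|Q|$-regular graph $G_{Q}(\randL)$, comparing it to a uniform random regular bipartite graph (cost $e^{O(n\log^{2}n)}$ from permanent bounds) and then to $\GGB{n}{|Q|/n}$ (cost $e^{O(n\log n)}$); the binomial tail then does the rest. This is exactly the kind of permanent-based comparison you invoke for your ``hard part,'' and the $e^{O(n\log^{2}n)}$ loss is precisely what produces the additive $n\log^{2}n$ and prevents the argument from reaching $O(\sqrt{\vol T})$. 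Second, your plan to treat small $a$ by approximating the first $a$ rows by independent permutations is reasonable heuristically, but making that approximation quantitative enough to survive a union bound over $\exp(\Theta(n))$ boxes again runs into the same completion-count estimates, so it does not obviously sidestep the bottleneck.
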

That is, Linial and Luria conjecture that there are Latin squares
(zero-one arrays) such that in any box, the density of ones is very
close to the density $1/n$ of ones in the entire $n\times n\times n$
array.

It is natural to expect that in fact the statement of \ref{conj:linial-luria}
holds a.a.s. for a uniformly random Latin square $\randL\in\L$. Linial
and Luria proved the weaker result that a.a.s. every ``empty'' box
$T$ with $N_{T}\left(\randL\right)=0$ has $\vol T\le n^{2}\log^{2}n$.
We are able to give a simple argument showing that random Latin squares
a.a.s. have quite low discrepancy, especially when considering boxes
of volume $\Omega\left(n^{2}\log^{2}n\right)$. This encompasses Linial
and Luria's aforementioned result.
\begin{thm}
\label{thm:weak-discrepancy}For a uniformly random $\randL\in\L$,
we a.a.s. have the following. For any box $T=I\times X\times Q$,
\[
\left|N_{T}\left(\randL\right)-\frac{\vol T}{n}\right|=O\left(\sqrt{\vol T}\log n+n\log^{2}n\right).
\]
\end{thm}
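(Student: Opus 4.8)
The plan is to bound the discrepancy $|N_T(\randL) - \vol T / n|$ for a fixed box $T = I \times X \times Q$ using a concentration argument, then take a union bound over all boxes. Fix $T$ and write $N_T = N_T(\randL)$. The starting point is to expose the rows of $\randL$ one at a time: letting $\randL^{(i)}$ denote the restriction of $\randL$ to its first $i$ rows, the sequence $Z_i = \E[N_T \mid \randL^{(i)}]$ is a Doob martingale, and $N_T = Z_n$, $\E N_T = Z_0$. The first thing I would establish is that $\E N_T$ is close to $\vol T / n$: by symmetry, each entry $L_{i,x}$ is uniform over $\range n$, so $\Pr[L_{i,x} = q] = 1/n$, and hence $\E N_T = |I||X||Q|/n = \vol T / n$ exactly. (More care is needed if one wants to exploit negative correlation, but for the first moment this is immediate.) So the whole problem reduces to showing that $N_T$ is concentrated around its mean within $O(\sqrt{\vol T}\log n + n\log^2 n)$.

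For the concentration, I would use the Azuma–Hoeffding inequality applied to the row-revealing martingale, but with a twist: revealing one row of a Latin square can change $N_T$ by as much as $\min(|X|,|Q|)$ in the worst case (a whole row of a subbox could be affected), which is too weak. The standard fix is a bounded-differences argument combined with the observation that for random Latin squares one should instead reveal entries in a way that permits control via switchings, or — more simply here — to note that conditioning on one additional row changes the conditional expectation of $N_T$ by only $O(|X| \cdot (\text{density}) + \text{lower order})$ on average. Concretely, I expect the cleanest route is: reveal the symbols one cell at a time along a fixed order of the $n^2$ cells of the square, and argue that exposing cell $(i,x)$ changes $Z$ by at most $1$ if $(i,x)$ lies in rows/columns of $I \times X$ (of which there are $|I||X| \le \vol T$ relevant cells, contributing $\le |I||X|$ to $\sum c_k^2$ — already too large). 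This naive count gives error $\sqrt{\vol T \log n}$ only when $\vol T \gtrsim$ something; the extra $n\log^2 n$ term is exactly what absorbs the regime where $\vol T$ is small relative to $n$, where instead one bounds $N_T \le$ (number of intercalate-like configurations touching $T$) crudely.

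The main obstacle — and where I would spend the most effort — is getting the Lipschitz/bounded-difference constants right so that $\sum_k c_k^2 = O(\vol T + n^2)$ rather than something larger. I anticipate the correct approach is the one used by Linial–Luria and in related work: use the fact (provable by a short switching / symmetry argument, or by the permanent-based machinery for counting Latin squares) that for any two rows, the number of intercalates they span is concentrated, and feed this into a two-stage exposure — first expose which cells of $A(\randL)$ restricted to $I \times X \times Q$ are ones, treating the marginal as approximately a product measure with the right density, then correct for the dependencies. Once the concentration inequality yields, for a single box, $\Pr[|N_T - \vol T/n| > t] \le 2\exp(-\Omega(t^2 / (\vol T + n^2)))$, I would finish by a union bound over the at most $2^{3n} = e^{O(n)}$ boxes: choosing $t = C(\sqrt{\vol T}\sqrt{n} + n^{3/2})$ — wait, to kill $e^{O(n)}$ we need $t^2/(\vol T + n^2) \gg n$, i.e. $t \gg \sqrt{n \vol T} + n^{3/2}$, which is weaker than claimed; so in fact the union bound must be stratified by $\vol T$, with the $\log n$ factors and the $n\log^2 n$ additive term arising precisely from the fact that boxes are determined by three subsets of $\range n$ and a Chernoff-type tail with the sharper exponent $t^2/\vol T$ (not $t^2/(\vol T + n^2)$) is available once the martingale increments in the "bulk" are shown to be $O(1)$ on $O(\vol T / n \cdot n) = O(\vol T)$ ... — I would reconcile this by proving the per-box bound $\Pr[|N_T - \vol T/n| > t] \le 2\exp(-ct^2/(\vol T \log n + n^2 \log^2 n))$ directly, which makes the union bound over $e^{O(n)}$ boxes go through with room to spare and yields the stated error term.
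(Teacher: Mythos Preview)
Your approach has a genuine gap, and it is the central difficulty you yourself flag but do not resolve: controlling the martingale increments. For a uniformly random Latin square there is no elementary ``change one cell'' or ``change one row'' operation that keeps you in the space of Latin squares, so the bounded-differences constants for a row-by-row or cell-by-cell exposure are not small in any obvious way. Revealing a single cell changes the conditional distribution of \emph{every} other cell, and revealing a single row can shift $N_T$ by as much as $\min(|X|,|Q|)$; your attempts to argue that the effective variance proxy $\sum c_k^2$ is $O(\vol T)$ or $O(\vol T + n^2)$ are not substantiated. Even granting such a bound, as you correctly compute, Azuma with exponent $t^2/(\vol T + n^2)$ cannot survive a union bound over $8^n$ boxes unless $t \gg \sqrt{n\,\vol T} + n^{3/2}$, which is much weaker than the claimed error.

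The paper takes a completely different route that sidesteps the increment problem entirely. It encodes the relevant statistic as an edge count in a bipartite graph: letting $G_Q(L)$ be the $|Q|$-regular bipartite graph on $[n]\sqcup[n]$ with an edge $(i,x)$ whenever $L_{i,x}\in Q$, one has $N_T(L)=e_{G_Q(L)}(I,X)$. Permanent bounds (Egorychev--Falikman and Br\'egman) show that the number of Latin squares extending any fixed $|Q|$-regular bipartite graph varies by at most a factor $e^{O(n\log^2 n)}$, so $G_Q(\randL)$ is within that factor of uniform on $|Q|$-regular bipartite graphs. A second comparison, via an enumeration lower bound for regular bipartite graphs, shows that a uniform $|Q|$-regular bipartite graph is within a factor $e^{O(n\log n)}$ of the conditional distribution of $\GGB n{|Q|/n}$ given regularity. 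In $\GGB n{|Q|/n}$ the edges are independent and $e_{\randB}(I,X)\sim\Bin(|I||X|,|Q|/n)$ has mean exactly $\vol T/n$, so a standard Chernoff bound gives exponent $\Omega\bigl(t^2/(\vol T/n + t)\bigr)$ --- note the crucial $\vol T/n$ rather than $\vol T$. Choosing $t$ a large multiple of $\sqrt{\vol T}\log n + n\log^2 n$ makes this exponent $\Omega(n\log^2 n)$, which absorbs both comparison factors and the $8^n$ union bound. The $n\log^2 n$ additive term in the theorem is exactly the cost of the permanent-based comparison, not a small-box correction as you speculated.
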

The proof of \ref{thm:weak-discrepancy} is in \ref{sec:discrepancy}.

\section{\label{sec:upper+lower}Outline of the proof of\texorpdfstring{}{ Theorem}
\ref{thm:upper+lower}}

\global\long\def\Lext#1{\mathcal{L}^{*}\left(#1\right)}

\global\long\def\Lk#1{\mathcal{L}_{#1}}

\global\long\def\K{K}

\global\long\def\A{A}

\global\long\def\B{B}

The first new ingredient for the proof of \ref{thm:upper+lower} is
the following upper bound, both in expectation and with high probability,
for the number of intercalates in two rows of a random Latin square.
\begin{thm}
\label{thm:two-rows-intercalate-tail}Let $\randNt=N_{2}\left(\randL\right)$
be the number of intercalates in the first two rows of a uniformly
random Latin square $\randL\in\L$. We have
\[
\E\randNt\le1+o\left(1\right)
\]
and
\[
\Pr\left(\randNt\ge t\right)=e^{-\Omega\left(t\log t\right)}.
\]
\end{thm}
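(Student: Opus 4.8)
The plan is to translate the statement into one about $2$-cycles of a (non-uniform) random derangement, and then to estimate that via a union bound over disjoint families of intercalates. Since relabelling symbols preserves the number of intercalates in the first two rows and every Latin square can be relabelled to have identity first row, we may condition on the first row of $\randL$ being $\id$; the second row is then a fixed-point-free permutation $\boldsymbol\sigma$ of $\range n$ (this is the ``difference permutation'' of two rows studied by Cavenagh, Greenhill and Wanless~\cite{CGW08}), and a pair of columns $\{x,y\}$ forms an intercalate in the first two rows exactly when $\boldsymbol\sigma(x)=y$ and $\boldsymbol\sigma(y)=x$. Thus $\randNt$ is precisely the number of $2$-cycles of $\boldsymbol\sigma$. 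Morally $\boldsymbol\sigma$ behaves like a uniformly random derangement, whose number of $2$-cycles is asymptotically $\Po(1/2)$; for such a derangement both $\E[\#2\text{-cycles}]\to1/2$ and $\Pr(\#2\text{-cycles}\ge t)=e^{-\Omega(t\log t)}$. So the task is to show that $\boldsymbol\sigma$ is close enough to uniform for these bounds to survive.

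For the upper tail I would use the factorial moment: as the $2$-cycles of a permutation are pairwise disjoint, $\binom{\randNt}{t}$ is the number of families of $t$ disjoint $2$-cycles of $\boldsymbol\sigma$, so $\Pr(\randNt\ge t)\le\E\binom{\randNt}{t}$ is a sum over the $\tfrac1{2^{t}t!}\cdot\tfrac{n!}{(n-2t)!}$ families $\{x_{i},y_{i}\}_{i\le t}$ of disjoint column pairs of the probability that the first two rows contain the corresponding $t$ disjoint intercalates. The key estimate is that this probability — equivalently, $\Pr(\randL\supseteq P)$ for the partial Latin square $P$ built from these $t$ intercalates — is at most $C^{2t}(n-2t)!/n!$ for an absolute constant $C$ (this is $C^{2t}$ times the value it would take if the two rows were a uniformly random $2\times n$ Latin rectangle). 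Granting this, the two factors telescope and $\Pr(\randNt\ge t)\le(C^{2}/2)^{t}/t!=e^{-\Omega(t\log t)}$. For the expectation, $\E\randNt=\binom n2\Pr(\boldsymbol\sigma(1)=2,\boldsymbol\sigma(2)=1)$, and the same estimate with the leading constant tracked — one wants the bound $(2+o(1))/n^{2}$ here — gives $\E\randNt\le1+o(1)$ (a sharper correlation estimate would yield the conjecturally optimal $1/2$).

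Everything now rests on the key estimate, which is a statement about correlations between entries of a random Latin square: it would follow from showing that, conditioned on the first row and on any partial assignment of the second row, the law of any further entry of the second row is within a constant factor of uniform on its set of admissible values. This in turn is a question about how the number of completions of a $2$-row Latin rectangle to an $n\times n$ Latin square depends on the rectangle — ideally one wants this count flat up to a constant factor — and I expect establishing it to be the main obstacle. I would approach it through switching and enumeration arguments for Latin squares (relating the completion counts of two rectangles that differ in two entries, and chaining), together with the permanent bounds of van der Waerden and Br\'egman to handle the regime where almost all of the second row is prescribed; for the expectation one additionally needs to track the implied constant carefully. With the key estimate in hand, the remainder is the elementary bookkeeping above.
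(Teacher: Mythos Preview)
Your reduction to counting $2$-cycles of the row-difference permutation $\boldsymbol\sigma$ is correct, and the factorial-moment framework is sound: granted your key estimate
\[
\Pr\bigl(\boldsymbol\sigma\text{ contains }t\text{ prescribed disjoint }2\text{-cycles}\bigr)\le C^{2t}\,\frac{(n-2t)!}{n!},
\]
both conclusions follow exactly as you describe. But that estimate is the entire content of the theorem, and your proposal does not establish it. The permanent bounds you invoke are far too weak: they only show (as in \ref{prop:extend-to-latin-squares}) that completion counts of $2\times n$ rectangles vary by a factor $e^{O(n\log^{2}n)}$, not $O(1)$, so chaining them over $2t$ conditionings gives nothing. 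Your fallback --- a switching ``relating the completion counts of two rectangles that differ in two entries'' --- is the right instinct but is not routine: if two $2\times n$ rectangles have second rows differing by a transposition in columns $x,y$, the obvious map between their completions (swap columns $x$ and $y$ along the cycle of the column permutation $\tau_{x,y}$ containing row~$2$) only works when that cycle avoids row~$1$, and you offer no repair for the case when it does not.

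The paper confronts exactly this obstruction, but organises the argument differently. Rather than proving pointwise flatness of completion counts and then running a moment computation, it works directly with the stratification $\mathcal L(s)=\{L\in\mathcal L:N_{2}(L)=s\}$ and builds a compound ``join'' switching that merges a $2$-cycle of $\boldsymbol\sigma$ into a longer cycle: a ``flip'' (your swap-along-a-column-cycle) when the pair is flippable, preceded by a ``turn'' (swapping rows $1$ and $2$ on the $2$-cycle itself, which toggles flippability) when it is not. Double-counting joins yields $|\mathcal L(s{+}1)|/|\mathcal L(s)|\le(1+o(1))/(s{+}1)$, from which both the tail bound and $\E\randNt\le 1+o(1)$ fall out directly. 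Your key estimate could in principle be extracted from this same turn--flip machinery, so as written you have correctly located the obstacle but deferred the whole proof to it.
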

Note that $\E\randN={n \choose 2}\E\randNt$ by linearity of expectation,
so the upper bound on $\E\randN$ in \ref{thm:upper+lower} immediately
follows from \ref{thm:two-rows-intercalate-tail}. (Then, the a.a.s.
upper bound on $\randN$ follows from Markov's inequality). We doubt
that the bound on $\E\randNt$ in \ref{thm:two-rows-intercalate-tail}
is sharp; \ref{conj:mckay-wanless} suggests that $\E\randNt\sim1/2$,
and we expect that moreover $\randNt$ has an asymptotic Poisson distribution
with this mean.

The second ingredient for \ref{thm:upper+lower} is a bound for the
lower tail probability of the number of intercalates in a random Latin
square.
\begin{thm}
\label{thm:lower-tail}There is a constant $C$ such that the following
holds. Let $\randN=N\left(\randL\right)$ be the number of intercalates
in a uniformly random Latin square $\randL\in\L$. Suppose $\varepsilon\ge C\log^{1/3}n/n^{1/6}$.
Then
\[
\Pr\left(\randN<\frac{n^{2}}{4}\left(1-\varepsilon\right)\right)\le\exp\left(-\Omega\left(\varepsilon^{2}\frac{\sqrt{n}}{\log n}\right)\right).
\]
\end{thm}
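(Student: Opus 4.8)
The plan is to expose the rows of $\randL$ one at a time and to analyse the Doob decomposition of $\randN$. Write $\randN=\sum_{j=2}^{n}\randN_{j}$, where $\randN_{j}$ is the number of intercalates of $\randL$ lying in rows $\{i,j\}$ for some $i<j$ (the intercalates ``created'' by row $j$), and let $\mathcal{F}_{j}$ be the $\sigma$-algebra generated by the first $j$ rows. Then $\randN=A+M_{n}$, where $A=\sum_{j=2}^{n}\E[\randN_{j}\mid\mathcal{F}_{j-1}]$ is the (random, $\mathcal{F}_{n-1}$-measurable) predictable part and $M_{n}=\sum_{j=2}^{n}\bigl(\randN_{j}-\E[\randN_{j}\mid\mathcal{F}_{j-1}]\bigr)$ is a martingale with $M_{0}=0$. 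Since $A+M_{n}<(1-\varepsilon)n^{2}/4$ forces $A<(1-\varepsilon/2)n^{2}/4$ or $M_{n}<-\varepsilon n^{2}/8$, it suffices to bound the probability of each of these two events by $\exp(-\Omega(\varepsilon^{2}\sqrt{n}/\log n))$.

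The martingale part is the easier one. The increment $\randN_{j}-\E[\randN_{j}\mid\mathcal{F}_{j-1}]$ is typically of order $j$ but can be as large as $(j-1)n/2$; this is where \ref{thm:two-rows-intercalate-tail} is used, since $\Pr(N_{\{i,j\}}\ge t)=e^{-\Omega(t\log t)}$ and a union bound over the fewer than $n^{2}$ pairs of rows shows that a.a.s.\ every $N_{\{i,j\}}$ is $O(\log n/\log\log n)$, hence every $\randN_{j}$ is $O(n\log n)$. After truncating at this scale (and absorbing the negligible event that truncation changes anything), a Freedman/Bernstein-type martingale inequality --- using that $\randN_{j}$ is conditionally concentrated with not-too-large variance --- bounds $\Pr(M_{n}<-\varepsilon n^{2}/8)$ by a quantity far smaller than needed.

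The real content is the bound on $\Pr\bigl(A<(1-\varepsilon/2)n^{2}/4\bigr)$, i.e.\ that the predictable part is a.a.s.\ close to $n^{2}/4$. Conditioned on $\mathcal{F}_{j-1}$, row $j$ is a uniformly random completion of the first $j-1$ rows to a $j\times n$ Latin rectangle, reweighted by the number of completions to a full Latin square; the associated ``available'' bipartite graph between columns and symbols is $(n-j+1)$-regular. When this graph is suitably quasirandom, the van der Waerden lower bound and the Br\'egman--Minc upper bound for permanents (in the spirit of the entropy/permanent estimates used by Linial and Luria) pin down the ratios of permanents governing the placement probabilities of the new row, so the probability that it places a given symbol, or a given pair of symbols, in a given column, or pair of columns, is as expected up to a small relative error; a first-moment computation then yields $\E[\randN_{j}\mid\mathcal{F}_{j-1}]\ge(1-o(1))(j-1)/2$, and summing over $j$ (discarding a vanishing fraction of late rows, whose total contribution is $o(n^{2})$) gives $A\ge(1-o(1))n^{2}/4$. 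It remains to show that the exposure process a.a.s.\ never reaches a prefix whose available graph is so far from quasirandom that this argument fails --- equivalently, that the relevant structural statistics of the random Latin rectangles stay concentrated throughout the exposure --- with failure probability $\exp(-\Omega(\varepsilon^{2}\sqrt{n}/\log n))$.

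I expect this last step --- controlling, uniformly over all $n$ steps, how far the sequence of random Latin rectangles can drift from quasirandomness --- to be the main obstacle, and to be exactly what forces the hypothesis $\varepsilon\ge C\log^{1/3}n/n^{1/6}$ (the relative error in the first-moment computation must be small compared with $\varepsilon$) and what caps the exponent at $\varepsilon^{2}\sqrt{n}/\log n$. Once quasirandomness is granted, the first-moment computation is routine, and, as noted, the martingale concentration is comfortably stronger than required.
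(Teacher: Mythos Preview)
Your plan is entirely different from the paper's, and---as you yourself flag---its core step is missing: you have not shown that the row-exposure process stays quasirandom at every stage with failure probability $\exp(-\Omega(\varepsilon^{2}\sqrt{n}/\log n))$. This is not a deferrable detail. The van der Waerden and Br\'egman--Minc bounds determine the permanent of a $d$-regular bipartite graph only up to a factor $e^{\Theta((n/d)\log d)}$, which even for $d=\Theta(n)$ is $n^{\Theta(1)}$; since two-entry placement probabilities of the new row are ratios of two such permanents, these bounds cannot pin them down to within $1+o(1)$, and your claimed ``routine'' first-moment estimate $\E[\randN_{j}\mid\mathcal{F}_{j-1}]\ge(1-o(1))(j-1)/2$ does not follow. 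That inequality is essentially the statement that, conditionally on the rest of the square, $\sigma_{i,j}$ is close to a uniform derangement---this is conjectured (cf.\ \cite{CGW08}) but open. Your martingale step is also incomplete: to make the truncation event hold with the required probability you must (by \ref{thm:two-rows-intercalate-tail} and a union bound over pairs) take $t$ with $t\log t\gtrsim\varepsilon^{2}\sqrt{n}/\log n$; with increments then bounded by $nt$, Azuma yields only $\exp(-\Omega(\varepsilon^{2}n/t^{2}))$, and optimising over $t$ gives an exponent of order $\varepsilon^{2/3}n^{1/3}\log^{2/3}n$, which for $\varepsilon=\Theta(1)$ is far weaker than the target $\sqrt{n}/\log n$. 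Rescuing this via Freedman would require conditional variance bounds on $\randN_{j}$ that you have not supplied and that are not obviously available.

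The paper avoids both problems by never analysing the conditional law of a new row. It fixes $k\asymp\sqrt{n}\log n/\varepsilon$ and uses a direct switching argument on $k\times n$ Latin rectangles (\ref{lem:lower-bound-rectangles}) to show that a uniformly random such rectangle, conditioned on no row lying in too many intercalates, has at least $(1-\varepsilon/2)k^{2}/4$ intercalates with probability $1-\exp(-\Omega(\varepsilon^{2}k^{2}))$. \ref{prop:extend-to-latin-squares} transfers this to the first $k$ rows of $\randL$ at a one-time cost of $e^{O(n\log^{2}n)}$, which is absorbed because $\varepsilon^{2}k^{2}\gg n\log^{2}n$ by the choice of $k$; \ref{thm:two-rows-intercalate-tail} handles the conditioning event. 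A random covering of all row-pairs by $k$-subsets (\ref{lem:covering-rows}) then upgrades the per-subrectangle bound to the global bound on $\randN$. The key point is that the coarse permanent comparison is paid once, at a carefully chosen scale $k$, rather than compounded over $n$ conditional steps; and concentration comes from switchings rather than from any conditional moment computation.
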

Clearly \ref{thm:lower-tail} implies the a.a.s. lower bound on $\randN$
in \ref{thm:upper+lower}, and because $\randN\ge0$ this in turn
implies the lower bound on $\E\randN$.

When studying random combinatorial structures with little independence,
an indispensable technique is the analysis of ``switching'' operations
that make local changes to an object. One defines switchings that
affect some parameter in a controllable way, then estimates the number
of ways to switch to and from each object, to understand the relative
likelihood of each possible value of the parameter. Switchings underpin
the proofs of both \ref{thm:two-rows-intercalate-tail} and \ref{thm:lower-tail}.

Latin squares are quite ``rigid'' objects, so one cannot easily
define switching operations that make only small changes to a Latin
square. In \cite{CGW08}, Cavenagh, Greenhill and Wanless managed
to overcome this difficulty when studying two fixed rows of a random
Latin square. They considered switchings that make wide-ranging, complicated
changes to the whole Latin square, but have a controllable effect
on the two rows of interest. We prove \ref{thm:two-rows-intercalate-tail}
with a simpler switching operation in a similar spirit. The details
are in \ref{sec:two-rows}.

To prove \ref{thm:lower-tail}, we use \ref{thm:two-rows-intercalate-tail}
and some ideas from \cite{MW99}. A $k\times n$ \emph{Latin rectangle}
is a $k\times n$ array of the numbers from $1$ to $n$, where each
number appears once in each row and not more than once in each column.
We denote the set of all $k\times n$ Latin rectangles by $\Lk k$.

For $k\le n$, any $k\times n$ Latin rectangle can be extended to
a $n\times n$ Latin square. The number of ways to do this does not
depend too much on the Latin rectangle. Indeed, for a $k\times n$
Latin rectangle $L\in\Lk k$ let $\Lext L\subseteq\L$ be the set
of $n\times n$ Latin squares whose first $k$ rows coincide with
$L$. The following estimate is proved with standard upper and lower
bounds on the permanent. It is essentially the same as \cite[Proposition~4]{MW99}.
\begin{prop}
\label{prop:extend-to-latin-squares}For Latin rectangles $L,L'\in\Lk k$,
\[
\frac{\Lext L}{\Lext{L'}}\le e^{O\left(n\log^{2}n\right)},
\]
uniformly over $k$.
\end{prop}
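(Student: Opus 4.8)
The plan is to count extensions row by row and reduce the count to a product of permanents of $0/1$ matrices encoding "allowed positions'', then apply classical bounds on the permanent. Fix $L \in \Lk k$. To build an $n \times n$ Latin square with first $k$ rows $L$, we add rows $k+1, \dots, n$ one at a time; given that rows $1, \dots, m-1$ have already been chosen (with $k \le m-1 < n$), the number of valid choices for row $m$ is exactly the number of perfect matchings in the bipartite graph between columns and symbols where column $x$ may receive symbol $q$ iff $q$ does not yet appear in column $x$. This graph is $(n-m+1)$-regular, so its biadjacency matrix is $(n-m+1)$ times a doubly stochastic matrix, and the number of perfect matchings is the permanent of that $0/1$ matrix. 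Thus $|\Lext L| = \sum \prod_{m=k+1}^{n} \operatorname{per}(B_m)$, where the sum is over the intermediate choices and each $B_m$ is an $(n-m+1)$-regular $0/1$ matrix of order $n$.

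Next I would pin down the range in which $\operatorname{per}(B_m)$ can lie. By the Van der Waerden–Falikman–Egorychev lower bound and the Bregman–Minc upper bound applied to a $d$-regular $0/1$ matrix $B$ of order $n$ (equivalently, to the doubly stochastic matrix $B/d$), one has
\[
\frac{n!\,d^{n}}{n^{n}} \;\le\; \operatorname{per}(B) \;\le\; (d!)^{n/d},
\]
and a short computation (Stirling) shows the ratio of the upper to the lower bound is $e^{O(n)}$ for every $d$ with $1 \le d \le n$. Hence, uniformly over all the matrices that can arise, $\operatorname{per}(B_m)$ is determined up to a factor $e^{O(n)}$ by $d = n-m+1$ alone. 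This is the heart of the estimate; the slightly delicate point is that $d$ can be as small as $1$, where the crude bounds are weakest, but even there the discrepancy is only a bounded factor per row, and we have at most $n$ rows, giving $e^{O(n^2)}$ in the worst naive accounting — too weak. To get the stated $e^{O(n\log^2 n)}$ one must be more careful: for the first $n - k$ of the added rows $d$ is comparable to $n$ (say $d \ge n/\log n$ once $m \le n - n/\log n$), where the per-row slack is only $e^{O(n/\text{(something)})}$ summing to $e^{O(n)}$ overall, and only the last $O(n/\log n)$ rows have small $d$; for those we bound each $\operatorname{per}(B_m)$ between $d!$ and, say, $(n)_d$, a factor of at most $n^{d}\le e^{O(d\log n)}$, and $\sum_{d=1}^{O(n/\log n)} d\log n = O(n^2/\log^2 n \cdot \log n)\cdot\tfrac1{?}$ — here one optimizes the cutoff to land exactly at $O(n\log^2 n)$.

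Finally, to compare $|\Lext L|$ with $|\Lext{L'}|$ for a different rectangle $L'$, I would observe that the whole argument is insensitive to which rectangle we start from: the product $\prod_{m=k+1}^n \operatorname{per}(B_m)$ telescopes into a quantity that, by the two-sided bounds above, lies in a fixed interval $[\,M,\ M\cdot e^{O(n\log^2 n)}\,]$ with $M = \prod_{d=1}^{n-k}(\text{Van der Waerden bound for }d)$, independent of $L$ and $L'$. (The sum over intermediate choices only makes $|\Lext L|$ larger than a single product and no larger than the number of such choices times the max product, and the number of choices is itself squeezed the same way.) Dividing the bound for $L$ by the bound for $L'$ gives the claimed $e^{O(n\log^2 n)}$, uniformly in $k$. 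The main obstacle, as indicated, is organizing the row-by-row error accounting so that the rows with small regularity $d$ contribute only $O(n\log^2 n)$ in total rather than $O(n^2)$; this is exactly the place where one mirrors the argument of \cite[Proposition~4]{MW99}.
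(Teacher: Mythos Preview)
Your overall strategy is the same as the paper's (and the argument in \cite[Proposition~4]{MW99}): extend row by row, note that each new row is a perfect matching in an $(n-m+1)$-regular bipartite graph, and sandwich the count between the Van der Waerden--Egorychev--Falikman lower bound and the Br\'egman--Minc upper bound. The gap is in the error accounting, which you yourself flag as ``the main obstacle'' but do not resolve.

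Specifically, the statement ``the ratio of the upper to the lower bound is $e^{O(n)}$ for every $d$'' is correct but too coarse, and your subsequent cutoff argument does not recover from this loss. A Stirling computation actually gives the sharper per-step estimate
\[
\frac{(d!)^{n/d}}{n!\,(d/n)^{n}}\;\le\;\exp\!\left(O\!\left(\frac{n(\log d+1)}{d}\right)\right),
\]
and then the total slack over all rows is simply
\[
\sum_{d=1}^{n-k}\frac{n(\log d+1)}{d}\;=\;n\sum_{d=1}^{n-k}\frac{\log d+1}{d}\;=\;O\!\left(n\log^{2}n\right),
\]
since $\sum_{d\le m}(\log d)/d=O(\log^{2}m)$. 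No cutoff is needed. Your attempted alternative for small $d$ also has a problem: the lower bound $\operatorname{per}(B_m)\ge d!$ is not justified (Van der Waerden only gives $n!(d/n)^{n}$, which is far below $d!$ for small $d$), so that branch of the argument would collapse even if the arithmetic were tidied up.

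A minor point: your expression $|\Lext L|=\sum\prod_{m}\operatorname{per}(B_m)$ is awkward. There is no outer sum to worry about; at each stage the number of choices for the next row is itself a permanent lying between the two bounds, so one directly gets
\[
\prod_{d=1}^{n-k} n!\left(\frac{d}{n}\right)^{n}\;\le\;|\Lext L|\;\le\;\prod_{d=1}^{n-k}(d!)^{n/d},
\]
with both products depending only on $k$ and $n$. Dividing the upper bound for $L$ by the lower bound for $L'$ immediately gives the claim.
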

So, the strategy is to find a lower bound on the number of intercalates
in a random $k\times n$ Latin rectangle (for some $k$ to be determined)
that holds with very high probability. We will then be able to apply
\ref{prop:extend-to-latin-squares} to show that the number of intercalates
in the first $k$ rows of a random Latin square satisfies the same
bound with high probability. We can use the union bound to show that
this holds simultaneously for many choices of $k$ rows, which gives
a lower bound for the total number of intercalates in a random Latin
square.

In \cite{MW99}, McKay and Wanless studied the number of intercalates
in a random Latin rectangle. Using their methods, we will prove the
following estimate.
\begin{lem}
\label{lem:lower-bound-rectangles}There is a constant $C$ such that
the following holds. Let $\randL\in\Lk k$ be a uniformly random $k\times n$
Latin rectangle, conditioned on the event that no row is involved
in more than $\K$ intercalates. Let $\randN=N\left(\randL\right)$
be the number of intercalates in $\randL$. If $k\ge\sqrt{n}$ and
$t\ge Ck^{2}\left(\K/n+k/\K\right)$, then 
\[
\Pr\left(\randN\le\frac{k^{2}}{4}-t\right)\le e^{-\Omega\left(t^{2}/k^{2}\right)}.
\]
\end{lem}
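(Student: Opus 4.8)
The strategy is a switching argument on the space $\Lk{k}$ (conditioned on the bounded-intercalate event), comparing Latin rectangles with $m$ intercalates to those with $m+1$ intercalates, in order to show that $\randN$ is concentrated around $k^{2}/4$ with a subgaussian lower tail. First I would compute the expectation: for two fixed rows of $\randL$ the expected number of intercalates is $\sim 1/2$ (this is essentially the Latin-rectangle analogue of \ref{thm:two-rows-intercalate-tail}, and in fact is easier since a random pair of rows of a Latin rectangle behaves almost like two random permutations with a few restrictions), and summing over the $\binom{k}{2}$ pairs of rows gives $\E\randN \sim k^{2}/4$. The conditioning on ``no row in more than $\K$ intercalates'' changes this by a negligible amount provided $\K$ is not too small, so we still have $\E\randN \geq k^{2}/4 - o(k^{2})$.

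Next I would set up the switching. Given a rectangle $L$ with an intercalate on rows $i,j$ and columns $x,y$ — so $L_{i,x}=L_{j,y}=a$ and $L_{i,y}=L_{j,x}=b$ — one would like a local move that destroys this intercalate. The natural move is to locate, in rows $i$ and $j$, another pair of columns $x',y'$ forming a suitable ``rainbow'' pattern, and perform a four-cycle (or $2\times 2$) swap on rows $i,j$ across columns $\{x,y,x',y'\}$ that eliminates the $(x,y)$ intercalate without creating too many new ones. The point of the hypothesis $k \geq \sqrt n$ is that there are $\Omega(k^{2})$ ``free'' column positions in a given pair of rows that are not already involved in intercalates, so the number of ways to perform a destroying switch on a rectangle with $m$ intercalates is $\Theta(m \cdot k^{2})$ up to the correction terms. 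Conversely, the number of ways to create an intercalate — choosing two rows, a column $x$, and then forcing a matching column $y$ by a reverse swap — is $\Theta(k^{2} \cdot k^{2})$-ish but, crucially, is $\Theta(k^{2})$ per unit once we fix the ambient structure, so that the ratio of the number of $(m{+}1)$-rectangles to $m$-rectangles (weighted by switching multiplicities) behaves like a constant times $k^{2}/m$. This is precisely the ratio one gets from a binomial/Poisson heuristic with mean $k^{2}/4$, and the error terms $\K/n$ (from switches that accidentally create a new intercalate using one of the $\K$ intercalate-positions in a row) and $k/\K$ (from the rare rows where the switch is obstructed) are exactly the $t \geq Ck^{2}(\K/n + k/\K)$ threshold in the statement.

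From the switching ratio estimate I would derive the tail bound in the standard way: writing $p_{m} = \Pr(\randN = m)$, the switching inequalities give $p_{m} / p_{m+1} \leq C' m / k^{2}$ for $m \leq k^{2}/4 - t$ (in the relevant range), which telescopes to show that $\Pr(\randN \leq k^{2}/4 - t)$ is dominated by a geometric-type sum; optimizing, or comparing directly to the lower tail of a $\Bin(\binom{k}{2}\cdot 4/k^{2}\cdot\text{something}, \cdot)$-like variable, yields $\Pr(\randN \leq k^{2}/4 - t) \leq e^{-\Omega(t^{2}/k^{2})}$ as long as $t$ is above the stated threshold (below the threshold the error terms in the switching ratio swamp the main term and no bound is claimed). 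Concretely, summing $\log(p_{m}/p_{m+1})$ over $m$ from $k^{2}/4 - t$ up to near $k^{2}/4$ gives a bound of order $-t^{2}/k^{2}$ by a Taylor expansion of $\log(1 - 4t/k^{2} + \dots)$ type, and one checks the sum of the tail $p_{m}$ is at most a constant times $p_{k^{2}/4 - t}$.

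**Main obstacle.** The hard part will be making the switching multiplicity bounds uniform and two-sided with good enough error terms. Destroying an intercalate is easy to bound from above (at most $m$ intercalates, at most $O(k^{2})$ choices of swap for each), but one needs a matching lower bound on the number of valid destroying switches — i.e. one must show that for most choices of the auxiliary columns the swap is legal (produces a genuine Latin rectangle) and does not itself create many new intercalates, and that the resulting rectangle still lies in the conditioned space (no row acquires more than $\K$ intercalates). Controlling the number of newly-created intercalates is where the $\K/n$ term enters — a random swap in two rows creates a new intercalate only if it lines up with one of the $O(\K)$ intercalate-involved columns in some third row, an event of probability $O(\K/n)$ per third row — and bounding the failure of the row-degree constraint is where $k/\K$ enters. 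Getting both of these simultaneously, while keeping the switching graph regular enough that the ratio $p_m/p_{m+1}$ is genuinely $\Theta(m/k^2)$ and not merely $O(m/k^2)$, is the crux; the rest is the routine telescoping argument for the tail.
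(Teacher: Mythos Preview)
Your high-level plan---a switching argument comparing rectangles with $s$ intercalates to those with $s+1$, then telescoping the ratios to get a subgaussian lower tail---is exactly what the paper does, and your closing remark that the hard part is getting a uniform \emph{lower} bound on the number of valid switchings is on point. The gap is in the switching itself. You propose to destroy an intercalate on rows $i,j$ via a ``four-cycle swap on rows $i,j$ across columns $\{x,y,x',y'\}$''; this is left vague (it is not clear how it preserves the Latin property), and once made precise it runs into the obstruction the paper explicitly warns about: simpler switchings do not leave enough freedom, and there exist rectangles from which no valid switch of the simpler type exists. The paper's fix is to modify a \emph{single} row using \emph{six} columns---two simultaneous 3-cycle rotations $(x\,y\,z)$ and $(x'\,y'\,z')$ in row $i$, designed so that exactly one new intercalate appears (at columns $x,x'$, with some other row $j$). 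The extra two columns $y,y'$ are dummy degrees of freedom whose sole purpose is to make the forward-count lower bound go through; this is the non-obvious idea, analogous to using 6-cycle rather than 4-cycle switchings for random regular graphs.

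Several smaller points: your multiplicity counts are dimensionally confused---auxiliary columns come from $[n]$, not $[k]$, so the correct orders are $\frac{1}{2}k^{2}n^{4}(1-O(1/k+k/n+K/n+s/(kK)))$ for creating-switches out of an $s$-intercalate rectangle and at most $2sn^{4}$ for creating-switches into one; the $n^{4}$ cancels and the ratio $4s/k^{2}$ is what drives the tail. The hypothesis $k\ge\sqrt{n}$ has nothing to do with ``$\Omega(k^{2})$ free column positions'' (there are $\Theta(n)$ columns regardless); it is used only to absorb the $1/k$ and $k/n$ error terms into $K/n+k/K$. Finally, the expectation step is unnecessary: the paper never computes $\E\randN$, since the value $k^{2}/4$ emerges directly as the point where the switching ratio $|\LkKs{k}{s-1}{K}|/|\LkKs{k}{s}{K}|\le (4s/k^{2})\exp(O(K/n+k/K))$ crosses $1$.
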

Note that \ref{thm:two-rows-intercalate-tail} and the union bound
imply that with high probability no row is involved in many intercalates,
which will give us an appropriate value of $\K$ with which to apply
\ref{lem:lower-bound-rectangles}. In \ref{sec:rectangles-lower-tail}
we prove \ref{lem:lower-bound-rectangles}, and in \ref{sec:squares-lower-tail}
we give the details of how to combine \ref{prop:extend-to-latin-squares},
\ref{lem:lower-bound-rectangles} and \ref{thm:two-rows-intercalate-tail}
to obtain \ref{thm:lower-tail}.

\section{\label{sec:two-rows}Proof of\texorpdfstring{}{ Theorem} \ref{thm:two-rows-intercalate-tail}}

\global\long\def\s#1{\sigma_{1,2}\left(#1\right)}

\global\long\def\C#1#2{C^{#1}\left(#2\right)}

\global\long\def\c#1#2{c_{#1}\left(#2\right)}

\global\long\def\X#1#2{N^{#1}\left(#2\right)}

\global\long\def\Lk#1{\mathcal{L}\left(#1\right)}

\global\long\def\sw#1#2{\mathrm{turn}_{#1}\left(#2\right)}

\global\long\def\fl#1#2{\mathrm{flip}_{#1}\left(#2\right)}

\global\long\def\jo#1#2{\mathrm{join}_{#1}\left(#2\right)}

\global\long\def\FL#1{\mathrm{FL}\left(#1\right)}

Note that any two rows $i,j$ of a Latin square $L\in\L$ define a
permutation $\sigma_{i,j}\left(L\right)$ on the columns of $L$:
column $x$ maps to the column $y$ with $L_{i,y}=L_{j,x}$. Note
that this permutation is a derangement; it has no fixed points $x\mapsto x$.
We will be concerned with the permutation $\s L$ defined by the first
two rows. This permutation decomposes into cycles, the set of which
we denote $\C{}L$. (For our purposes a cycle is a set of columns).
Let $\C{\alpha}L\subseteq\C{}L$ be the set of cycles of length $\alpha$,
and for a column $x$ let $\c xL\in\C{}L$ be the cycle which contains
$x$. Note that $\C 2L$ is the set of intercalates in the first two
rows of $L$.

We first define two primitive switching operations on a Latin square,
chosen such that they have a controllable effect on the intercalate
count in the first two rows.
\begin{defn}
\label{def:primitive-switchings}Consider a Latin square $L$.

\begin{itemize}
\item For any cycle $c\in\C{}L$, we can obtain a new Latin square $\sw cL$
by exchanging the contents of rows 1 and 2, for each column in $c$.
We also write $\sw xL$ to denote $\sw{c_{x}\left(L\right)}L$.
\item Just as two rows $i,j$ of $L$ define the permutation $\sigma_{i,j}\left(L\right)$,
every two columns $x$ and $y$ also define a permutation $\tau_{x,y}\left(L\right)$
of rows (with row $i$ mapping to the row $j$ which satisfies $L_{j,x}=L_{i,y}$).
In the cycle decomposition of $\tau_{x,y}\left(L\right)$, if rows
$1$ and $2$ are in different cycles $c_{1}$ and $c_{2}$, then
we say $\left\{ x,y\right\} $ is a \emph{flippable} pair, and write
$\left\{ x,y\right\} \in\FL L$. We can obtain a new Latin square
$\fl{\left\{ x,y\right\} }L$ by exchanging column $x$ and $y$ for
each row in $c_{2}$.
\end{itemize}
\end{defn}
\begin{figure}[h]
\begin{centering}
$L=\quad$%
\begin{tabular}{|c|c|c|cc}
\hline 
1 & \textbf{5} & \textbf{3} & \multicolumn{1}{c|}{4} & \multicolumn{1}{c|}{2}\tabularnewline
\hline 
4 & \textbf{3} & \textbf{5} & \multicolumn{1}{c|}{2} & \multicolumn{1}{c|}{1}\tabularnewline
\hline 
2 &  & 4 &  & \tabularnewline
\cline{1-1} \cline{3-3} 
3 &  & 2 &  & \tabularnewline
\cline{1-1} \cline{3-3} 
5 &  & 1 &  & \tabularnewline
\cline{1-1} \cline{3-3} 
\end{tabular}$\qquad\qquad$$\sw{\left\{ 2,3\right\} }L=\quad$%
\begin{tabular}{|c|c|c|cc}
\hline 
1 & \textbf{3} & \textbf{5} & \multicolumn{1}{c|}{4} & \multicolumn{1}{c|}{2}\tabularnewline
\hline 
4 & \textbf{5} & \textbf{3} & \multicolumn{1}{c|}{2} & \multicolumn{1}{c|}{1}\tabularnewline
\hline 
2 &  & 4 &  & \tabularnewline
\cline{1-1} \cline{3-3} 
3 &  & 2 &  & \tabularnewline
\cline{1-1} \cline{3-3} 
5 &  & 1 &  & \tabularnewline
\cline{1-1} \cline{3-3} 
\end{tabular}
\par\end{centering}
\caption{We show the effect of the turn operation. (Not all cells are depicted).
Note that $\left\{ 1,3\right\} $ was not flippable in $L$ but is
flippable in $\protect\sw{\left\{ 2,3\right\} }L$. Also note that
$\protect\s L=\protect\s{\protect\sw{\left\{ 2,3\right\} }L}=\left(1\,4\,5\right)\left(2\,3\right)$.}
\end{figure}
\begin{figure}[h]
\begin{centering}
$L=\quad$%
\begin{tabular}{|c|c|c|cc}
\hline 
1 & 3 & 5 & \multicolumn{1}{c|}{4} & \multicolumn{1}{c|}{2}\tabularnewline
\hline 
\textbf{4} & 5 & \textbf{3} & \multicolumn{1}{c|}{2} & \multicolumn{1}{c|}{1}\tabularnewline
\hline 
\textbf{2} &  & \textbf{4} &  & \tabularnewline
\cline{1-1} \cline{3-3} 
\textbf{3} &  & \textbf{2} &  & \tabularnewline
\cline{1-1} \cline{3-3} 
5 &  & 1 &  & \tabularnewline
\cline{1-1} \cline{3-3} 
\end{tabular}$\qquad\qquad$$\fl{\left\{ 1,3\right\} }L=\quad$%
\begin{tabular}{|c|c|c|cc}
\hline 
1 & 3 & 5 & \multicolumn{1}{c|}{4} & \multicolumn{1}{c|}{2}\tabularnewline
\hline 
\textbf{3} & 5 & \textbf{4} & \multicolumn{1}{c|}{2} & \multicolumn{1}{c|}{1}\tabularnewline
\hline 
\textbf{4} &  & \textbf{2} &  & \tabularnewline
\cline{1-1} \cline{3-3} 
\textbf{2} &  & \textbf{3} &  & \tabularnewline
\cline{1-1} \cline{3-3} 
5 &  & 1 &  & \tabularnewline
\cline{1-1} \cline{3-3} 
\end{tabular}
\par\end{centering}
\caption{We show the effect of the flip operation. Note that $\protect\s L=\left(1\,4\,5\right)\left(2\,3\right)$
and $\protect\s{\protect\fl{\left\{ 1,3\right\} }L}=\left(1\,2\,3\,4\,5\right)$.}
\end{figure}
We will be using the flip operation to merge two cycles into a larger
cycle, and we will be using the turn operation to make a pair of columns
from different cycles flippable, if necessary. To justify this, we
make a number of simple observations about the properties of the turn
and flip operations.
\begin{fact}
\label{fact:observations}The operations in \ref{def:primitive-switchings}
have the following consequences.

\begin{enumerate}
\item Suppose $\left\{ x,y\right\} \in\FL L$, and suppose $\c xL\ne\c yL$,
with say $\c xL\in\C{\alpha}L$ and $\c yL\in\C{\beta}L$. Let $L'=\fl{\left\{ x,y\right\} }L$.
Then 
\[
\c x{L'}=\c y{L'}=\c xL\cup\c yL\in\C{\alpha+\beta}{L'}.
\]
Also, $\C{}L\backslash\left\{ \c xL,\c yL\right\} =\C{}{L'}\backslash\left\{ \c x{L'}\right\} $.
That is, flipping with $x$ and $y$ merges $\c xL$ and $\c yL$
and leaves the other cycles unaffected.
\item If $c\in\C 2L$ is an intercalate, then $\s L=\s{\sw cL}.$ That is,
the turn operation does not change the induced permutation.
\item Suppose $\c xL\ne\c yL$ and $\left\{ x,y\right\} \in\FL L$ (respectively,
$\left\{ x,y\right\} \notin\FL L$). Let $L'=\sw xL$. Then $\left\{ x,y\right\} \notin\FL{L'}$
(respectively $\left\{ x,y\right\} \in\FL{L'}$). That is, the turn
operation changes the flippability of $\left\{ x,y\right\} $.
\item For any cycle $c$, we have $\sw c{\sw cL}=L$. For any $\left\{ x,y\right\} \in\FL L$,
we have\\
$\fl{\left\{ x,y\right\} }{\fl{\left\{ x,y\right\} }L}=L$. That is,
the turn and flip operations are both involutions.
\item Suppose $\left\{ x,y\right\} \in\FL L$ and $\c xL\ne\c yL$, with
$\c yL\in\C 2L$. Let $\sigma'=\s{\fl{\left\{ x,y\right\} }L}$. Then
$\left(\sigma'\right)^{2}\left(x\right)=y$.
\end{enumerate}
\end{fact}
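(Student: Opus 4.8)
The plan is to reduce all five items to the effect of a single turn or flip on the two induced permutations $\s L$ and $\tau_{x,y}\left(L\right)$, together with two elementary facts about permutations: right-multiplying a permutation by a transposition $\left(x\,y\right)$ merges the cycles containing $x$ and $y$ when these cycles are distinct (and splits one cycle into two otherwise), while left-multiplying a permutation by $\left(1\,2\right)$ toggles whether $1$ and $2$ lie in a common cycle.

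First I would work out the effect of the turn. Chasing entries through the definitions shows that if $c$ is a union of cycles of $\sigma:=\s L$ and $L'=\sw cL$, then $\s{L'}$ agrees with $\sigma$ outside $c$ and equals $\sigma^{-1}$ on $c$; that is, turning reverses the affected cycles and leaves the rest of the permutation alone. In particular $L'$ is a Latin square (the only point to check is that rows $1$ and $2$ remain permutations, which holds because $c$ is $\sigma$-invariant), and $c$ is again a single cycle of $\s{L'}$. Specializing $c$ to a $2$-cycle gives $\s L=\s{\sw cL}$ since a transposition equals its own inverse, which is item~(2). Moreover, the turn operation is literally the involution ``exchange rows $1$ and $2$ in the columns of $c$'', and it remains applicable to $\sw cL$ by the above, so $\sw c{\sw cL}=L$; this is the turn half of item~(4).

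Next I would work out the effect of the flip. With $\sigma:=\s L$ and $L'=\fl{\left\{ x,y\right\} }L$, among rows $1$ and $2$ only row $2$ is changed — its entries in columns $x$ and $y$ are transposed — so unwinding the definition of $\s{L'}$ gives $\s{L'}=\sigma\circ\left(x\,y\right)$ (and the same observation confirms $L'$ is a Latin square). Since $\c xL\ne\c yL$, the cycle-merging fact shows that $\c x{L'}$ and $\c y{L'}$ both equal $\c xL\cup\c yL$, of length $\alpha+\beta$, and that the remaining cycles are unchanged; this is item~(1). For item~(5), writing $\sigma'=\sigma\circ\left(x\,y\right)$ we get $\sigma'\left(x\right)=\sigma\left(y\right)$ and hence $\left(\sigma'\right)^{2}\left(x\right)=\sigma'\left(\sigma\left(y\right)\right)=\sigma\left(\sigma\left(y\right)\right)$, the last equality because $\sigma\left(y\right)\notin\left\{ x,y\right\} $ (as $\c xL\ne\c yL$ and $\sigma$ is a derangement); since $\c yL$ is a $2$-cycle we have $\sigma^{2}\left(y\right)=y$, so $\left(\sigma'\right)^{2}\left(x\right)=y$.

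It remains to handle item~(3) and the flip half of item~(4), for which I would carry out the analogous bookkeeping for the row-permutation $\tau_{x,y}$. For the flip, an entry-chase shows that in $L'=\fl{\left\{ x,y\right\} }L$ the cycle $c_{2}$ of $\tau_{x,y}\left(L\right)$ through row $2$ is reversed while the other cycles are untouched; hence rows $1$ and $2$ still lie in different cycles, so $\left\{ x,y\right\} \in\FL{L'}$ with the same set $c_{2}$, and a second flip exchanges columns $x$ and $y$ back on the rows of $c_{2}$, recovering $L$. For item~(3), note first that $y\notin\c xL$ (otherwise $\c xL=\c yL$), so $L'=\sw xL$ alters column $x$ only in rows $1$ and $2$ and leaves column $y$ untouched; a short computation then gives $\tau_{x,y}\left(L'\right)=\left(1\,2\right)\circ\tau_{x,y}\left(L\right)$, and by the left-multiplication fact this toggles whether rows $1$ and $2$ are co-cyclic, i.e.\ toggles flippability of $\left\{ x,y\right\} $. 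The only real work in all of this is the entry-chasing bookkeeping — keeping the directions of $\sigma_{1,2}$ and $\tau_{x,y}$ and the left-versus-right composition conventions straight, and checking at each step that the intermediate array is a Latin square on which the next operation is defined — and I expect tracking $\tau_{x,y}$ through the flip (the flip half of item~(4)) to be the fiddliest piece.
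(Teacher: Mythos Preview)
Your argument is correct. The paper does not prove this Fact at all; it introduces it as ``a number of simple observations'' and leaves the verification to the reader, so there is no proof in the paper to compare against. Your approach --- computing that a flip replaces $\sigma_{1,2}$ by $\sigma_{1,2}\circ(x\,y)$ and that a turn of $c_x(L)$ replaces $\tau_{x,y}$ by $(1\,2)\circ\tau_{x,y}$, then invoking the standard transposition-merges-or-splits-cycles fact --- is exactly the natural way to fill in these details, and all five items fall out as you describe. The bookkeeping you flag as fiddly (especially the flip half of item~(4), where $\tau_{x,y}$ gets reversed on $c_2$) is genuinely the only place where care is needed, and you have handled it correctly.
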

With these observations in mind we can define a compound operation
that merges an intercalate with another cycle, regardless of flippability.
\begin{defn}
For columns $x,y$ with $\c xL\ne\c yL$ and $\c yL\in\C 2L$ define
\[
\jo{x,y}L=\begin{cases}
\fl{\left\{ x,y\right\} }L & \mbox{if }\left\{ x,y\right\} \in\FL L,\\
\fl{\left\{ x,y\right\} }{\sw yL} & \mbox{if }\left\{ x,y\right\} \notin\FL L.
\end{cases}
\]
If also $\c xL\in\C 2L$ this is a \emph{double} join, otherwise it
is a \emph{single }join. Note that a double join is not in general
symmetric in $x$ and $y$; we have $\jo{x,y}L=\jo{y,x}L$ if and
only if $\left\{ x,y\right\} \in\FL L$.
\end{defn}
Let $\X{\alpha}L=\left|\C{\alpha}L\right|$ be the number of cycles
of length $\alpha$. Let $\Lk s\subseteq\L$ be the set of Latin squares
$L$ with $s$ intercalates in the first two rows (that is, with $\X 2L=s$).
We make some observations about the join operation.
\begin{fact}
Single and double joins have the following consequences.

\begin{enumerate}
\item A single join always decreases $\X 2{\cdot}$ by exactly one, and
the merged cycle has length greater than 4.
\item A double join always decreases $\X 2{\cdot}$ by exactly two, and
the merged cycle has length 4.
\item For $L\in\Lk{s+1}$, the number of Latin squares $L'\in\Lk s$ which
we can reach with a single join is 
\[
\left(n-2\X 2L\right)\times2\X 2L=2\left(s+1\right)\left(n-2\left(s+1\right)\right).
\]
(Choose a column $x$ not in an intercalate and a column $y$ in an
intercalate).
\item For $L\in\Lk{s+2}$, the number of Latin squares $L'\in\Lk s$ which
we can reach with a double join is at least 
\[
2\X 2L\times2\left(\X 2L-1\right)/2=2\left(s^{2}+3s+2\right)\ge2s^{2}.
\]
(Choose a column $x$ in an intercalate and a column $y$ in a different
intercalate. Since $\left(x,y\right)$ and $\left(y,x\right)$ may
produce the same join, we then divide by 2 for a lower bound).
\item For $L'\in\Lk s$, the number of Latin squares $L\in\Lk{s+1}$ which
can reach $L'$ with a single join is at most 
\[
2\left(n-2\X 2{L'}-3\X 3{L'}-4\X 4{L'}\right)\le2\left(n-2s\right).
\]
(For $\sigma'=\s{L'}$, choose a column $x$ in a cycle with length
greater than 4, and let $y=\left(\sigma'\right)^{2}\left(x\right)$.
If $\left\{ x,y\right\} \in\FL{L'}$ then flip, and then either turn
or don't).
\item For $L'\in\Lk s$, the number of Latin squares $L\in\Lk{s+2}$ which
can reach $L'$ with a double join is at most 
\[
2\times4\X 4{L'}\le2n.
\]
 (For $\sigma'=\s{L'}$, choose a column $x$ in a 4-cycle, let $y=\left(\sigma'\right)^{2}\left(x\right)$,
flip if possible and then either turn or don't).
\end{enumerate}
\end{fact}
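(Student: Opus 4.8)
\emph{Effect on the intercalate count (items (1)--(2)).} The point is that every join, single or double, is --- up to an irrelevant turn --- a flip of a flippable pair joining two distinct cycles. Let $L'=\jo{x,y}L$, where $\c xL\ne\c yL$ and $\c yL\in\C 2L$, and put $\alpha=\left|\c xL\right|$. If $\left\{x,y\right\}\notin\FL L$ then $L'=\fl{\left\{x,y\right\}}{\sw yL}$; since $\c yL$ is an intercalate, \ref{fact:observations}(2) gives $\s{\sw yL}=\s L$, so in $\sw yL$ the cycles of $x$ and $y$ are unchanged and still distinct with $\c y{\sw yL}\in\C 2{\sw yL}$, while \ref{fact:observations}(3) makes $\left\{x,y\right\}$ flippable in $\sw yL$. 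So in all cases $L'$ is a flip of a flippable pair joining cycles of lengths $\alpha$ and $2$, and \ref{fact:observations}(1) shows these merge into one cycle of length $\alpha+2$ in $L'$ with all other cycles untouched. For a single join $\c xL\notin\C 2L$ forces $\alpha\ne 2$, and $\alpha\ge 2$ since $\s L$ is a derangement, so $\alpha\ge 3$ and the merged cycle has length $\alpha+2\ge 5>4$; exactly the intercalate $\c yL$ is destroyed and no $2$-cycle is created, so $\X 2{L'}=\X 2L-1$. For a double join $\c xL\in\C 2L$ too, so $\alpha=2$, the merged cycle has length $4$, two intercalates are destroyed and none created, and $\X 2{L'}=\X 2L-2$.

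\emph{Forward counts (items (3)--(4)).} A single join from $L$ is precisely a choice of a column $x$ in no intercalate and a column $y$ in some intercalate (which automatically gives $\c xL\ne\c yL$, $\c yL\in\C 2L$, $\c xL\notin\C 2L$), so there are $\left(n-2\X 2L\right)\cdot 2\X 2L$ of them, and by definition every square reachable by a single join arises this way. A short check --- the cycle structure of $L'$ recovers the unordered pair of merged cycles, hence $\c yL$ (the $2$-cycle) and $\c xL$, and $\fl{\left\{x,y\right\}}{\cdot}$ literally exchanges the two named columns --- shows distinct pairs give distinct squares, yielding the count $2\left(s+1\right)\left(n-2\left(s+1\right)\right)$. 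For a double join, $x$ and $y$ both range over the $2\X 2L$ columns in intercalates with $y$ outside $x$'s intercalate, giving $2\X 2L\left(2\X 2L-2\right)$ ordered pairs; since $\jo{x,y}L=\jo{y,x}L$ exactly when $\left\{x,y\right\}\in\FL L$, each unordered pair accounts for at most two ordered pairs, so the number of distinct squares reached is at least $\tfrac12\cdot 2\X 2L\left(2\X 2L-2\right)=2\X 2L\left(\X 2L-1\right)=2\left(s^{2}+3s+2\right)\ge 2s^{2}$.

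\emph{Reverse counts (items (5)--(6)), the step needing the most care.} Fix $L'\in\Lk s$ and suppose $L$ reaches $L'$ by a single join with columns $x,y$. By the analysis above, in $L'$ the columns $x,y$ lie in a common cycle of length $\ge 5$, and applying \ref{fact:observations}(5) to the flip that produced $L'$ (whose hypotheses were verified in the first paragraph) gives $y=\left(\s{L'}\right)^{2}\left(x\right)$; thus $y$ is determined by $x$, which ranges over the $n-2\X 2{L'}-3\X 3{L'}-4\X 4{L'}$ columns lying in cycles of length $>4$. Since $\left\{x,y\right\}$ is flippable in $L'$ (it was produced by a flip), un-flipping (\ref{fact:observations}(4)) recovers $\fl{\left\{x,y\right\}}{L'}$, and $L$ is either this square or the result of additionally turning the cycle of $y$ --- at most two possibilities. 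Hence at most $2\left(n-2\X 2{L'}-3\X 3{L'}-4\X 4{L'}\right)\le 2\left(n-2s\right)$ squares $L$ single-join to $L'$. Item (6) is identical except that a double join's merged cycle has length exactly $4$, so $x$ ranges only over the $4\X 4{L'}$ columns in $4$-cycles, giving $2\cdot 4\X 4{L'}\le 2n$. The main obstacle is exactly this reverse direction: one must be sure that the recipe ``take $x$ in a cycle of the prescribed length, set $y=\left(\s{L'}\right)^{2}\left(x\right)$, un-flip, optionally un-turn'' really does capture every preimage and leaves only the single binary ambiguity (the turn bit) --- which is what parts (1), (4), (5) of \ref{fact:observations} are designed to supply --- together with the mild asymmetry of double joins that accounts for the factor $\tfrac12$ in item (4).
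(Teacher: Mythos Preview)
Your proposal is correct and follows exactly the approach the paper intends: the paper states this Fact with the reasoning embedded in the parenthetical hints rather than giving a separate proof, and your write-up is a faithful expansion of those hints using parts (1)--(5) of \ref{fact:observations}. Your treatment of the reverse counts in (5)--(6) via $y=(\sigma')^{2}(x)$ and the ``un-flip, optionally un-turn'' recipe is precisely what the paper sketches; the only place you add anything beyond the paper is the brief injectivity remark in item (3), which the paper simply asserts.
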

Let $J\left(s\right)$ be the number of ways to single join from a
Latin square in $\Lk{s+1}$ to one in $\Lk s$. That is, $J\left(s\right)$
is the number of pairs $\left(L,L'\right)$ where $L\in\Lk{s+1}$,
$L'\in\Lk s$, and $L$ can be obtained from $L'$ by a single join.
We have 
\begin{align*}
2\left(s+1\right)\left(n-2\left(s+1\right)\right)\left|\Lk{s+1}\right| & =J\left(s\right)\le2\left(n-2s\right)\left|\Lk s\right|,\\
\frac{\left|\Lk{s+1}\right|}{\left|\Lk s\right|} & \le\frac{n-2s}{\left(s+1\right)\left(n-2s-2\right)}.
\end{align*}
Similarly, double-counting the number of ways to double join from
a Latin square in $\Lk{s+2}$ to one in $\Lk s$, we obtain 
\[
\frac{\left|\Lk{s+2}\right|}{\left|\Lk s\right|}\le\frac{n}{s^{2}}.
\]
So, 
\begin{equation}
\frac{\left|\Lk{s+1}\right|}{\left|\Lk s\right|}\le\frac{1}{s+1}\left(1+O\left(\frac{1}{n}\right)\right)\label{eq:join-main-recurrence}
\end{equation}
for $2s\le n/2$, and $\left|\Lk{s+2}\right|/\left|\Lk s\right|\le1$
for $2s\ge n/2$ (for large $n$). It follows that for $t\le n/4$
\begin{align*}
\Pr\left(\randNt=t\right) & \le\frac{\left|\Lk t\right|}{\left|\Lk 0\right|}\le\prod_{s=0}^{t-1}\frac{\left|\Lk{s+1}\right|}{\left|\Lk s\right|}=\frac{1}{t!}e^{O\left(t/n\right)}
\end{align*}
and
\[
\Pr\left(t\le\randNt\le\frac{n}{4}\right)\le O\left(1\right)\sum_{s=t}^{n/4}\frac{1}{s!}\le O\left(\frac{1}{t!}+\frac{1}{\left(t+1\right)!}+\frac{1}{\left(t+2\right)!}\sum_{r=0}^{\infty}\frac{1}{\left(t+2\right)^{r}}\right)=O\left(\frac{1}{t!}\right)=e^{-\Omega\left(t\log t\right)}.
\]
For $t>n/4$ we have $\Pr\left(\randNt=t\right)=O\left(1/\left(\left(n/4\right)!\right)\right)$
and
\begin{align}
\Pr\left(\randNt\ge t\right) & \le O\left(\frac{n}{\left(n/4\right)!}\right)=e^{-\Omega\left(n\log n\right)}=e^{-\Omega\left(t\log t\right)}.\label{eq:many-intercalates-probability-bound}
\end{align}
It therefore follows that $\Pr\left(\randNt\ge t\right)=e^{-\Omega\left(t\log t\right)}$
for all $t$.

We now bound $\E\randNt$. By \ref{eq:join-main-recurrence}, for
$1\le t\le n/4$ we have $t\Pr\left(\randNt=t\right)\le\left(1+o\left(1\right)\right)\Pr\left(\randNt=t-1\right)$,
so using \ref{eq:many-intercalates-probability-bound} and noting
that $\randNt\le n/2$,
\begin{align*}
\E\randNt & \le0\Pr\left(\randNt=0\right)+\left(1+o\left(1\right)\right)\sum_{t=1}^{n/4}\Pr\left(\randNt=t-1\right)+\frac{n}{2}\Pr\left(\randNt>\frac{n}{4}\right)\\
 & \le0+\left(1+o\left(1\right)\right)+\frac{n}{2}e^{-\Omega\left(n\log n\right)}\to1.
\end{align*}

\section{\label{sec:rectangles-lower-tail}Proof of\texorpdfstring{}{ Lemma}
\ref{lem:lower-bound-rectangles}}

\global\long\def\Lk#1{\mathcal{L}_{#1}}

\global\long\def\LkK#1#2{\mathcal{L}_{#1}^{#2}}

\global\long\def\LkKs#1#2#3{\mathcal{L}_{#1}^{#3}\left(#2\right)}

\global\long\def\ro#1#2#3{\mathrm{rot}_{#1}^{#2}\left(#3\right)}

\global\long\def\tw#1#2#3{\mathrm{twist}_{#1}^{#2}\left(#3\right)}

Let $\LkK k{\K}\subseteq\Lk k$ be the set of Latin rectangles $L$
in which no row is involved in more than $\K$ intercalates. (We say
these Latin rectangles are ``good''). Let $\LkKs ks{\K}\subseteq\LkK k{\K}$
be the set of good Latin rectangles with exactly $s$ intercalates.

To prove \ref{lem:lower-bound-rectangles} we use essentially the
same switching as in \cite{MW99}, designed to increase the number
of intercalates by exactly 1.
\begin{defn}
\label{def:twist}Consider a Latin rectangle $L\in\LkK k{\K}$. For
a row $i$ and a cyclically ordered set of columns $\left(x\,y\,z\right)$,
we obtain a new $k\times n$ array $L'=\ro{\left(x\,y\,z\right)}iL$
by swapping the symbols in positions $\left(i,x\right),\,\left(i,y\right),\,\left(i,z\right)$
in a cyclic fashion: $L_{i,x}'=L_{i,z},\,L_{i,y}'=L_{i,x},\,L_{i,z}'=L_{i,y}$.
We call this the \emph{rotate} operation. Note that $L'$ might not
be a Latin rectangle, because we might have caused a column to contain
two of the same symbol. 

Now, we define the \emph{twist }operation. For a Latin rectangle $L\in\LkK k{\K}$,
a row $i$ and distinct columns $x,y,z,x',y',z'$, let $L'=\ro{\left(x\,y\,z\right)}i{\ro{\left(x'\,y'\,z'\right)}iL}$.
Suppose the following conditions are satisfied.

\begin{itemize}
\item The rectangle $L'$ is a Latin rectangle, and it is good (that is,
$L'\in\LkK k{\K}$).
\item The positions $\left(i,y\right),\,\left(i,z\right),\,\left(i,y'\right),\,\left(i,z'\right)$
are involved in no intercalates in $L$ or in $L'$
\item The positions $\left(i,x\right)$ and $\left(i,x'\right)$ are involved
in no intercalates in $L$, and in $L'$ there is an intercalate involving
both $\left(i,x\right)$ and $\left(i,x'\right)$. This is the only
intercalate involving $\left(i,x\right)$ or $\left(i,x'\right)$
in $L'$.
\end{itemize}
\end{defn}
Then we define the twist of $L$ by $\tw{\left\{ \left(x,y,z\right),\left(x',y',z'\right)\right\} }iL=L'$.

\begin{figure}[h]
\begin{centering}
$L=\quad$%
\begin{tabular}{|c|c|c|c|c|c|}
\hline 
\textbf{1} & \textbf{3} & \textbf{5} & \textbf{4} & \textbf{2} & \textbf{6}\tabularnewline
\hline 
\multicolumn{1}{c}{} &  & 2 & 3 & \multicolumn{1}{c}{} & \multicolumn{1}{c}{}\tabularnewline
\cline{3-4} 
\end{tabular}$\qquad\qquad$$\tw{\left\{ \left(3,1,2\right),\left(4,6,5\right)\right\} }1L=\quad$%
\begin{tabular}{|c|c|c|c|c|c|}
\hline 
\textbf{5} & \textbf{1} & \textbf{3} & \textbf{2} & \textbf{6} & \textbf{4}\tabularnewline
\hline 
\multicolumn{1}{c}{} &  & 2 & 3 & \multicolumn{1}{c}{} & \multicolumn{1}{c}{}\tabularnewline
\cline{3-4} 
\end{tabular}
\par\end{centering}
\caption{\label{fig:rotate}We show the effect of the twist operation to create
an intercalate involving $\left(1,3\right)$ and $\left(1,4\right)$.}
\end{figure}

\begin{lem}
\label{lem:forwards-rotate}The number of good Latin rectangles $L'\in\LkKs k{s+1}{\K}$
which we can reach via a twist from a specific good Latin rectangle
$L\in\LkKs ks{\K}$ is at least
\[
\frac{1}{2}k^{2}n^{4}\left(1-O\left(\frac{1}{k}+\frac{k}{n}+\frac{\K}{n}+\frac{s}{k\K}\right)\right).
\]
\end{lem}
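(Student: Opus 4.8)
The plan is a counting argument in the spirit of \cite{MW99}. I will parametrise the twists applicable to $L$ by tuples $(i,j,x,x',y,y')$, where $i$ is the row being twisted, $\{i,j\}$ and $\{x,x'\}$ are to be the rows and columns of the newly created intercalate, and $y,y'$ are the two auxiliary columns. The remaining two columns $z,z'$ are then \emph{forced}: for the twist to produce an intercalate on rows $\{i,j\}$ and columns $\{x,x'\}$ we need the new entries $L'_{i,x}=L_{i,z}$ and $L'_{i,x'}=L_{i,z'}$ to equal $L_{j,x'}$ and $L_{j,x}$, so $z$ is the column with $L_{i,z}=L_{j,x'}$ and $z'$ the column with $L_{i,z'}=L_{j,x}$, and then $L'=\ro{\left(x\,y\,z\right)}i{\ro{\left(x'\,y'\,z'\right)}iL}$ does contain such an intercalate. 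Imposing only $i\ne j$ and that the six columns $x,y,z,x',y',z'$ be distinct already leaves $\bigl(1-O(1/k+1/n)\bigr)k^{2}n^{4}$ tuples (for instance $z=x$ forces $L_{j,x'}=L_{i,x}$, which excludes at most one $j$ for each $(i,x,x')$). I will then argue that all but an $O(1/k+k/n+\K/n+s/(k\K))$ fraction of these tuples define a valid twist in the sense of \ref{def:twist} whose image lies in $\LkKs k{s+1}{\K}$, and that the twist map is exactly two-to-one onto its image, so that dividing by $2$ gives the lemma.

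The two-to-one claim is the easy part. A twist alters only row $i$, and alters it in exactly the six positions $(i,x),\dots,(i,z')$, since each affected symbol of that row genuinely changes; hence from $L$ and $L'$ one reads off $i$, the six changed columns, and the permutation they undergo. That permutation has cycle type $(3,3)$, so it decomposes uniquely into two $3$-cycles, recovering $\{(x\,y\,z),(x'\,y'\,z')\}$ as an unordered pair of cyclically ordered triples. By the twist conditions, among the six changed positions exactly $(i,x)$ and $(i,x')$ lie in an intercalate of $L'$, so $\{x,x'\}$ is recovered; as $x$ and $x'$ lie in different triples this labels the two triples, and $y,z$ (respectively $y',z'$) are then determined by the cyclic order. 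Finally $j$ is the second row of the unique intercalate of $L'$ on columns $\{x,x'\}$ through row $i$. Thus $(L,L')$ determines the tuple up to the swap $(x,x',y,y')\leftrightarrow(x',x,y',y)$, under which every twist condition is symmetric, so exactly two ordered tuples have each image.

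It remains to bound the invalid tuples. First, $L'$ fails to be a Latin rectangle only if some changed column of row $i$ acquires a repeated symbol; for the position $(i,x)$ this means the new entry $L_{j,x'}$ already occurs in column $x$, and for fixed $(i,x)$ there are $k-1$ forbidden symbols, each realised by at most $k$ pairs $(j,x')$, so at most $O(k^{2})$ of the roughly $kn$ pairs $(j,x')$ are bad, an $O(k/n)$ fraction; the five other changed columns are handled the same way. Second, a short argument using the twist conditions shows that (given $L$ good and the above) the intercalates of $L'$ are precisely those of $L$ together with the single new one on $\{i,j\},\{x,x'\}$, so $L'$ automatically has $s+1$ intercalates, and $L'$ is good iff rows $i$ and $j$ each lie in at most $\K-1$ intercalates of $L$; since the rows' intercalate counts sum to $2s$, at most $2s/\K$ rows are saturated, an $O(s/(k\K))$ fraction. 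Third, the requirement that $(i,x),(i,x'),(i,y),(i,z),(i,y'),(i,z')$ lie in no intercalate of $L$ costs $O(\K/n)$, because row $i$ meets at most $2\K$ intercalate positions (for the forced columns $z,z'$ each bad value is the one forced by at most $k$ pairs $(j,x')$, which only makes the contribution smaller). Finally, an unwanted intercalate through a changed position $(i,p)$ in $L'$ must pair row $i$ with an \emph{unchanged} row $m$, hence forces $L'_{i,p}=L_{m,c}$ and $L_{i,c}=L_{m,p}$ for the adjacent column $c$, and for each $m$ these equations pin down $c$ and then $p$ (or, with $p\in\{x,x'\}$ fixed, the remaining data), an $O(k/n)$ fraction. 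Summing these contributions gives the claimed $O(1/k+k/n+\K/n+s/(k\K))$ fraction of invalid tuples.

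I expect the main obstacle to be this last class of bad events, since $L'$ depends on all parameters at once, so that ``does a changed position of $L'$ lie in an unwanted intercalate?'' is a priori a global question. The point I would rely on is that a twist changes only row $i$: any intercalate of $L'$ through a changed position uses row $i$ together with some unchanged row, so it is controlled by equations relating the new symbols of row $i$ to the unchanged entries of the other rows, which reduces everything to counting in $L$. A secondary irritation is keeping the error bookkeeping honest when the forced columns $z,z'$ coincide with $x,x',y,y'$ or with an auxiliary column $c$ appearing above; such coincidences are themselves lower order and can be absorbed into the error term.
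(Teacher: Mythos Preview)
Your proposal is correct and follows essentially the same approach as the paper: the parametrisation by $(i,j,x,x',y,y')$ with $z,z'$ forced, the four classes of bad events (column collisions, row saturation, pre-existing intercalates at the six positions, and unwanted intercalates in $L'$), and the bounds on each are all as in the paper's proof. Your treatment of the two-to-one claim is in fact more explicit than the paper's, which simply divides by $2$ without justification; and your observation that the twist conditions force $N(L')=s+1$ (so that goodness of $L'$ reduces to $i,j\notin\Psi(L)$) is left implicit in the paper but is exactly what makes the argument work.
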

\begin{proof}
Let $\Psi\left(L\right)$ be the set of rows of $L$ involved in exactly
$\K$ intercalates. We have $\left|\Psi\left(L\right)\right|\le2s/\K$.
Now, choose rows $i$ and $j$ not in $\Psi\left(L\right)$, in which
we will create an intercalate. There are at least
\[
\left(k-\frac{2s}{\K}\right)\left(k-\frac{2s}{\K}-1\right)=k^{2}\left(1-O\left(\frac{s}{k\K}+\frac{1}{k}\right)\right)
\]
ways to do this. (Since we chose rows involved in at most $\K-1$
intercalates, we do not need to worry about violating the goodness
condition).

Next, choose distinct columns $x,\,y,\,x',\,y'$. To create an intercalate
in columns $x$ and $x'$, let $z'$ be the unique column with $L_{j,x}=L_{i,z'}$,
and let $z$ be the column with $L_{j,x'}=L_{i,z}$. There are $n^{4}\left(1+O\left(1/n\right)\right)$
ways to make these choices, but some of these do not give rise to
a valid twist operation. Let $L'=\ro{\left(x\,y\,z\right)}i{\ro{\left(x'\,y'\,z'\right)}iL}$;
the possible violations are as follows.

\begin{itemize}
\item The symbol $L_{i,x}$ might already appear in column $y$ (so that
$L'$ is not a Latin rectangle). For any $x',y,y'$ there are at most
$k$ choices of $x$ with this property, so we should subtract $kn^{3}$
for our upper bound. Similarly $L_{i,x'}$, $L_{i,y}$, $L_{i,y'}$,
$L_{i,z}$ or $L_{i,z'}$ might appear in column $y'$, $z$, $z'$,
$x$ or $x'$ respectively. We should therefore subtract $6kn^{3}$.
\item We might have $z'\in\left\{ x',y,y'\right\} $. For any $x',y,y'$
there are at most $3$ choices of $x$ that cause this. Similarly
we might have $z\in\left\{ x,y,y'\right\} $. We should subtract $6n^{3}$
to compensate for both.
\item One of the positions $\left(i,x\right)$, $\left(i,x'\right)$, $\left(i,y\right)$,
$\left(i,y'\right)$, $\left(i,z\right)$ or $\left(i,z'\right)$
might already be involved in an intercalate. We should subtract $6\times2\K n^{3}$
to compensate for this.
\item There might be an intercalate involving $\left(i,y\right)$ in $L'$.
Recall that $L_{i,y}'=L_{i,x}$, so this can only occur if for one
of the $k-1$ non-$y$ columns ($w$, say) in $L'$ which contain
the symbol $L_{i,x}$ (in row $q\ne i$, say), we have $L_{i,w}'=L_{q,y}'$.
For any $x,x',y'$ there are at most $k$ choices of $y$ for which
this occurs. Similarly, putting $L_{i,y}$, $L_{i,z}$, $L_{i,x'}$,
$L_{i,y'}$ or $L_{i,z'}$ in position $\left(i,z\right)$, $\left(i,x\right)$,
$\left(i,y'\right)$, $\left(i,z'\right)$ or $\left(i,x'\right)$
respectively might create an intercalate involving that position (other
than the one given by positions $\left(i,x\right)$, $\left(i,x'\right)$,
$\left(j,x\right)$, $\left(j,x'\right)$). Similar logic shows that
for each of the 6 cases, we should subtract $kn^{3}$ to compensate.
\end{itemize}
If the above violations do not occur then\textbf{ }we can use $i,x,y,z,x',y',z'$
to twist, so the number of valid ways to twist is at least
\begin{align*}
 & \frac{1}{2}k^{2}\left(1-O\left(\frac{s}{k\K}+\frac{1}{k}\right)\right)\left(n^{4}\left(1+O\left(\frac{1}{n}\right)\right)-O\left(\K n^{3}\right)-O\left(kn^{3}\right)-O\left(n^{3}\right)\right)\\
 & \quad=\frac{1}{2}k^{2}n^{4}\left(1-O\left(\frac{1}{k}+\frac{k}{n}+\frac{\K}{n}+\frac{s}{k\K}\right)\right).
\end{align*}
(We divide by 2 to compensate for the fact that we can exchange $\left(x\,y\,z\right)$
and $\left(x'\,y'\,z'\right)$ to give the same twist).
\end{proof}
\begin{rem*}
Note that there are a number of simpler switching operations one could
have defined in place of the twist operation. For instance, one could
redefine the rotate operation to use cycles of 2 columns rather than
cycles of 3 columns. However, (the analogue of) \ref{lem:forwards-rotate}
would not hold with this simpler switching operation; there would
be less freedom to choose a way to switch, and in fact there are Latin
rectangles from which it is not possible to create exactly one intercalate
using the simpler switching (see \cite{MW99} for an example). This
situation is analogous to the use of 6-cycle switchings rather than
4-cycle switchings in the analysis of random regular graphs (see for
example \cite{KSVW01}).
\end{rem*}
\begin{lem}
\label{lem:backwards-rotate}The number of good Latin rectangles $L\in\LkKs k{s-1}{\K}$
from which we can twist to a specific good Latin rectangle $L'\in\LkKs ks{\K}$
is at most $2sn^{4}$.
\end{lem}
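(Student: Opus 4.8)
The plan is to reverse-engineer the twist operation: given the target rectangle $L'\in\LkKs ks{\K}$, I count the number of triples $(L,i,\{(x,y,z),(x',y',z')\})$ that could have produced $L'$. Since the twist is an involution-like composition of two rotates, reconstructing $L$ from $L'$ amounts to identifying which intercalate of $L'$ was the one freshly created, and which data $(i,x,y,z,x',y',z')$ was used. First I would use the defining conditions of the twist: in $L'$ there is a unique intercalate involving both positions $(i,x)$ and $(i,x')$, and this is the only intercalate touching row $i$ at columns $x$ or $x'$. So to specify the reverse operation, pick the distinguished intercalate of $L'$ — there are at most $s$ of these — and this simultaneously fixes the row $i$ and the pair of columns $\{x,x'\}$ (the two columns of that intercalate in row $i$), up to the $2$-fold ambiguity of which is $x$ and which is $x'$. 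That accounts for the factor $2s$.

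Next I would observe that once $i$, $x$, $x'$ are pinned down, the columns $z$ and $z'$ are \emph{determined} by $L'$, not free: by \ref{fact:observations}-style reasoning (the rotate equations $L'_{i,x}=L_{i,z}$, $L'_{i,x'}=L_{i,z'}$ read backwards, together with the intercalate condition $L_{j,x}=L_{i,z'}$, $L_{j,x'}=L_{i,z}$ where $j$ is the partner row of the created intercalate), the symbols sitting in positions $(i,z)$ and $(i,z')$ of $L'$ are forced, hence $z,z'$ are the unique columns of $L'$ in row $i$ carrying those symbols. The partner row $j$ is likewise read off from the intercalate in $L'$. That leaves only $y$ and $y'$ to be chosen freely among the remaining columns, giving at most $n^2$ choices — but in fact the bound $2sn^4$ in the statement is deliberately generous, so I need only the crude count: at most $s$ choices for the distinguished intercalate, at most $2$ for the ordering of $\{x,x'\}$, and at most $n^4$ to (wastefully) bound all remaining column choices including $y,y'$. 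Multiplying gives $2sn^4$, and each such choice of reverse-data recovers at most one $L$ (apply the inverse rotates), so the number of valid $L$ is at most $2sn^4$.

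The main obstacle — though it is more bookkeeping than genuine difficulty — is verifying that $z$, $z'$, and $j$ really are uniquely determined by $L'$ and the choice of distinguished intercalate, i.e. that no information about $L$ beyond $(i,x,x',y,y')$ is needed to invert the twist. This relies on the fact that the intercalate condition in \ref{def:twist} pins down the symbols $L_{j,x}=L'_{i,z'}$ and $L_{j,x'}=L'_{i,z}$ entirely within row $i$ and row $j$ of $L'$ (since rotates only altered row $i$, row $j$ of $L'$ equals row $j$ of $L$). Once that is checked, the counting is immediate and the stated bound $2sn^4$ follows with room to spare.
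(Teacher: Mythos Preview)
Your approach is essentially the same as the paper's: identify the freshly created intercalate (at most $s$ choices), recover the row and column data, and bound the remaining parameters by $n^4$. The final bound $2sn^4$ is correct, but your accounting of the factor of $2$ is off.

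An intercalate is a $2\times 2$ subsquare, so it determines a pair of rows $\{i,j\}$ and a pair of columns $\{x,x'\}$; it does \emph{not} single out which of the two rows is the row $i$ in which the twist was performed. The factor of $2$ in the paper's count comes from this choice of $i\in\{i,j\}$, not from ordering $\{x,x'\}$. In fact, since the twist data is the unordered set $\{(x,y,z),(x',y',z')\}$, ordering the pair $\{x,x'\}$ is redundant once you bound the four remaining columns by $n^4$. As written, your count omits the genuine $2$-fold choice of row and inserts an unnecessary one for the columns; the numbers happen to match, but the reasoning should be corrected.

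Separately, your claim that $z$ and $z'$ are determined by $L'$ and the intercalate is not right: what is forced is $L_{i,z}=L'_{i,x}$ and $L_{i,z'}=L'_{i,x'}$, i.e.\ the symbols in $L$ at those positions, but $L'_{i,z}=L_{i,y}$ depends on the free parameter $y$, so you cannot locate $z$ in row $i$ of $L'$ without already knowing $y$. Since you fall back on the crude $n^4$ bound for all of $y,y',z,z'$ anyway (exactly as the paper does), this does not affect the result.
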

\begin{proof}
Twisting from $L$ must have created one of the $s$ intercalates
in $L'$ as its main intercalate, operating in one of its two rows.
The columns $\left\{ x,x'\right\} $ are determined by the intercalate
that was created, and there are at most $n^{4}$ choices of $y,y',z,z'$
that could have been used. So the number of Latin rectangles $L$
that can twist to $L'$ is at most $2sn^{4}$.
\end{proof}
We can use \ref{lem:forwards-rotate,lem:backwards-rotate} to give
an upper and lower bound on the number of ways to twist from a Latin
rectangle in $\LkKs k{s-1}{\K}$ to a Latin rectangle in $\LkKs ks{\K}$.
For $s\le k^{2}/4$, $k\ge\sqrt{n}$ and $Ck^{2}\left(\K/n+k/\K\right)\le k^{2}/4$
with large $C$, we obtain 
\[
\frac{\left|\LkKs k{s-1}{\K}\right|}{\left|\LkKs ks{\K}\right|}\le\frac{s}{k^{2}/4}\exp\left(O\left(\frac{\K}{n}+\frac{k}{\K}\right)\right),
\]
so for $0\le s\le k$,
\[
\frac{\left|\LkKs k{k^{2}/4-s}{\K}\right|}{\left|\LkKs k{k^{2}/4}{\K}\right|}\le\prod_{r=0}^{s-1}\frac{\left|\LkKs k{k^{2}/4-r-1}{\K}\right|}{\left|\LkKs k{k^{2}/4-r}{\K}\right|}\le\prod_{r=0}^{s-1}\left(\left(\frac{k^{2}/4-r}{k^{2}/4}\right)\exp\left(O\left(\frac{\K}{n}+\frac{k}{\K}\right)\right)\right).
\]
Now, using the fact that $\left(k^{2}/4-r\right)/\left(k^{2}/4\right)\le\exp$$\left(-r/\left(k^{2}/4\right)\right)$,
we have
\begin{align*}
\Pr\left(\randN=k^{2}/4-s\right) & \le\frac{\left|\LkKs k{k^{2}/4-s}{\K}\right|}{\left|\LkKs k{k^{2}/4}{\K}\right|}\\
 & \le\exp\left(-\left(\sum_{r=0}^{s-1}\Theta\left(\frac{r}{k^{2}}\right)\right)+O\left(s\left(\frac{\K}{n}+\frac{k}{\K}\right)\right)\right)\\
 & =\exp\left(-\Omega\left(\frac{s^{2}}{k^{2}}\right)+O\left(s\left(\frac{\K}{n}+\frac{k}{\K}\right)\right)\right).
\end{align*}
If $t\ge Ck^{2}\left(\K/n+k/\K\right)$ for large $C$, then
\[
\Pr\left(\randN<k^{2}/4-t\right)\le\sum_{s=t}^{k^{2}/4}e^{-\Omega\left(s^{2}/k^{2}\right)}=e^{-\Omega\left(t^{2}/k^{2}\right)}.
\]

\section{\label{sec:squares-lower-tail}Proof of\texorpdfstring{}{ Theorem}
\ref{thm:lower-tail}}

The constant $C$ in the theorem statement will be a function of some
other constant $C_{0}$, to be determined. For some $\varepsilon$
satisfying $C\log^{1/3}n/n^{1/6}\le\varepsilon\le1$, let $k=C_{0}\sqrt{n}\log n/\varepsilon$
and let $\K=\varepsilon n/C_{0}$.

Let $\mathcal{E}$ be the event that none of the first $k$ rows of
$\randL$ are involved in more than $\K$ intercalates, in the Latin
rectangle induced by the first $k$ rows. Certainly $\mathcal{E}$
occurs if every pair of distinct rows (among the first $k$) has at
most $\K/\left(k-1\right)$ intercalates, because for each row there
are $k-1$ possible pairs of rows involving that row. By \ref{thm:two-rows-intercalate-tail}
and the union bound (and symmetry considerations),
\[
1-\Pr\left(\mathcal{E}\right)=k^{2}\exp\left(-\Omega\left(\frac{\K}{\left(k-1\right)}\log\frac{\K}{\left(k-1\right)}\right)\right)=\exp\left(-\Omega\left(\frac{\K}{k}\log\frac{\K}{k}\right)\right)=\exp\left(-\Omega\left(\varepsilon^{2}\frac{\sqrt{n}}{\log n}\right)\right).
\]
Let $\randNk$ be the number of intercalates in the first $k$ rows
of $\randL$, and let $t=\varepsilon k^{2}/8$. Note that $C_{0}\K/n\le\varepsilon$
and $\left(C^{3}/C_{0}^{2}\right)k/\K\le\varepsilon$, so for large
$C_{0}$ and larger $C$ (such that $C_{0}$ and $C^{3}/C_{0}^{2}$
are both much larger than the ``$C$'' in \ref{lem:lower-bound-rectangles}),
the conditions in \ref{lem:lower-bound-rectangles} are satisfied.
Combining \ref{lem:lower-bound-rectangles} and \ref{prop:extend-to-latin-squares},
\[
\Pr\left(\randNk<\left(1-\varepsilon/2\right)k^{2}/4\cond\mathcal{E}\right)=e^{-\Omega\left(\varepsilon^{2}k^{2}\right)}e^{O\left(n\log^{2}n\right)}=e^{-\Omega\left(\varepsilon^{2}k^{2}\right)}
\]
for large $C_{0}$. Note that $\varepsilon^{2}k^{2}\gg\varepsilon^{2}\sqrt{n}/\log n$
so 
\begin{equation}
\Pr\left(\randNk<\left(1-\varepsilon/2\right)k^{2}/4\right)\le\exp\left(-\Omega\left(\varepsilon^{2}\frac{\sqrt{n}}{\log n}\right)\right)\label{eq:subrectangle-lower-tail}
\end{equation}
unconditionally. To transfer this result from the first $k$ rows
to the whole of $\randL$, we need the following covering lemma.

\global\long\def\randF#1{\boldsymbol{F}_{\!#1}}

\begin{lem}
\label{lem:covering-rows}For any $k\ll n$ and any $M\gg\left(n\log n/k\right)^{2}$
there exist $k$-subsets $F_{1},\dots,F_{M}$ of $\range n$, such
that every pair $\left\{ i,j\right\} \subseteq\range n$ is included
in 
\[
M\left(\frac{k}{n}\right)^{2}\left(1+O\left(\log n/\left(\sqrt{M}k/n\right)+k/n\right)\right)=M\left(\frac{k}{n}\right)^{2}\left(1+o\left(1\right)\right)
\]
 of the $F_{i}$.
\end{lem}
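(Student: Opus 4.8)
The plan is to use the probabilistic method: choose $F_1,\dots,F_M$ independently and uniformly at random among all $k$-subsets of $\range n$, and show that with positive probability every pair is covered roughly the expected number of times. For a fixed pair $\{i,j\}$, let $\randF{m}$ denote the $m$-th random set and let $Z_{i,j} = \sum_{m=1}^M \one[\{i,j\}\subseteq \randF m]$ count how many of the $F_m$ contain the pair. Each indicator is a Bernoulli random variable with success probability $p := \binom{n-2}{k-2}/\binom{n}{k} = \frac{k(k-1)}{n(n-1)} = (k/n)^2(1+O(1/n)+O(1/k))$, and the indicators are independent across $m$, so $Z_{i,j}\sim\Bin(M,p)$ with mean $\mu := Mp = M(k/n)^2(1+o(1))$.

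The key step is a Chernoff bound together with a union bound over the $\binom n2$ pairs. By the standard multiplicative Chernoff inequality, for any $\delta\in(0,1)$,
\[
\Pr\left(\left|Z_{i,j}-\mu\right|\ge\delta\mu\right)\le 2\exp\left(-\delta^2\mu/3\right).
\]
We want to choose $\delta = \delta(n) \to 0$ so that this, multiplied by $\binom n2 \le n^2$, is $o(1)$; that requires $\delta^2\mu \gg \log n$, i.e. $\delta^2 M (k/n)^2 \gg \log n$. Given the hypothesis $M \gg (n\log n/k)^2$, we have $M(k/n)^2 \gg \log^2 n$, so it suffices to take (for instance) $\delta$ a constant times $\log n / \sqrt{M(k/n)^2} = \log n/(\sqrt M k/n)$, which by hypothesis tends to $0$; this is exactly the $\log n/(\sqrt M k/n)$ term appearing in the claimed error bound. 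With this choice, a union bound gives that with probability $1-o(1)$, simultaneously for all pairs $\{i,j\}$,
\[
Z_{i,j} = \mu\left(1\pm\delta\right) = M(k/n)^2\left(1+O\left(\log n/(\sqrt M k/n)\right)\right)\left(1+O(1/n)+O(1/k)\right),
\]
which simplifies to the stated $M(k/n)^2(1+O(\log n/(\sqrt M k/n)+k/n))$. Since this event has positive probability, a valid choice of $F_1,\dots,F_M$ exists.

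I do not anticipate a serious obstacle here; the lemma is a routine concentration-plus-union-bound argument, and the only thing to be careful about is bookkeeping the several lower-order error terms ($1/n$ and $1/k$ from the hit probability $p$, and $\log n/(\sqrt M k/n)$ from the Chernoff deviation) and checking they collapse into the two terms $O(\log n/(\sqrt M k/n) + k/n)$ quoted in the statement — noting that the $1/n$ term is dominated by the $k/n$ term. The hypotheses $k\ll n$ and $M\gg(n\log n/k)^2$ are used precisely to ensure $p<1$ (so $k$-subsets with the pair exist and the binomial is nondegenerate) and that $\delta\to 0$, respectively.
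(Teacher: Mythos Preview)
Your proposal is correct and essentially identical to the paper's proof: the paper also chooses the $F_m$ independently and uniformly at random among $k$-subsets, computes $p=\binom{n-2}{k-2}/\binom{n}{k}=(k/n)^2(1+O(k/n))$, and applies a Chernoff bound together with the union bound over all $\binom{n}{2}$ pairs to get deviation $O(\sqrt{Mp}\,\log n)$, exactly as you outline. One small remark on the bookkeeping you flag at the end: the $O(1/k)$ term in your expansion of $p$ is not in general dominated by $k/n$ (it would require $k\gtrsim\sqrt{n}$), but the paper's proof makes the same simplification and in the application $k$ is indeed of order $\sqrt{n}\log n$, so this is harmless.
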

\begin{proof}
Let $\randF 1,\dots,\randF M$ be independent uniformly random sets
of $k$ rows. For a given pair of rows and some index $i$, the probability
that $\randF i$ contains that pair is 
\[
p={n \choose k-2}/{n \choose k}=\left(\frac{k}{n}\right)^{2}\left(1+O\left(\frac{k}{n}\right)\right).
\]
By the Chernoff bound and the union bound, a.a.s. every pair is contained
in 
\[
Mp+O\left(\sqrt{Mp}\log n\right)=Mp\left(1+O\left(\frac{\log n}{\sqrt{Mp}}\right)\right).
\]
of the $\randF i$. Therefore there exists a specific choice of the
$F_{i}$s that satisfies the requirements of the lemma.
\end{proof}
We apply \ref{lem:covering-rows} with $M=n^{2}$, say, to obtain
sets $F_{1},\dots,F_{M}$. By the union bound and symmetry, the subrectangle
given by the rows of each $F_{i}$ contains at least $\left(1-\varepsilon/2\right)k^{2}/4$
intercalates, except with the probability in \ref{eq:subrectangle-lower-tail}.
Noting that $k/n+\log n/\left(\sqrt{M}k/n\right)\ll\varepsilon$,
this implies 
\begin{align*}
\randN & \ge\frac{M\left(1-\varepsilon/2\right)k^{2}/4}{M\left(k/n\right)^{2}\left(1+o\left(\varepsilon\right)\right)}\\
 & \ge\left(1-\varepsilon\right)\frac{n^{2}}{4}.
\end{align*}

\section{\label{sec:discrepancy}Proof of\texorpdfstring{}{ Theorem} \ref{thm:weak-discrepancy}}

\global\long\def\Gd#1{\mathcal{G}_{#1}}

\global\long\def\Gm#1{\mathcal{G}_{#1}^{*}}

\global\long\def\randG{\boldsymbol{G}}

\global\long\def\randB{\boldsymbol{B}}

\global\long\def\GGB#1#2{\BB\left(#1,#2\right)}

Fix a box $T=I\times X\times Q$ (there are $\left(2^{n}\right)^{3}=8^{n}$
possible choices). We will show that the bound on $N_{T}\left(\randL\right)$
in \ref{thm:weak-discrepancy} fails with probability $o\left(8^{-n}\right)$,
which will allow us to apply the union bound over choices of $T$.

For a Latin square $L$, we define a bipartite graph $G_{Q}\left(L\right)$
as follows. Both parts have $n$ vertices (we abuse notation and say
the vertex set is $\range n\sqcup\range n$); one of the parts is
identified with the set of rows of the Latin square and the other
part is identified with the set of columns. For each row $i$ and
column $x$ such that $L_{i,x}\in Q$, we put an edge between $i$
and $x$ in $G_{Q}\left(L\right)$. Now, the number of ones $N_{T}\left(L\right)$
in $T$ is just the number of edges $e_{G_{Q}\left(L\right)}\left(I,X\right)$
between $I$ and $X$ in $G_{Q}\left(L\right)$.

Let $\Gd d$ be the set of $d$-regular bipartite graphs on $\range n\sqcup\range n$.
For $G\in\Gd{\left|Q\right|}$, let $\left|\L^{*}\left(G\right)\right|$
be the number of Latin squares $L$ with $G_{Q}\left(L\right)=G$.
In a similar way to \ref{prop:extend-to-latin-squares}, we can use
standard bounds on the permanent to prove that $\left|\L^{*}\left(G\right)\right|$
does not vary very much with $G$.
\begin{prop}
\label{prop:extend-graphs-to-latin-squares}For a set of symbols $Q$
and $\left|Q\right|$-regular bipartite graphs $G$ and $G'$,
\[
\frac{\left|\L^{*}\left(G\right)\right|}{\left|\L^{*}\left(G'\right)\right|}\le e^{O\left(n\log^{2}n\right)}
\]
uniformly over $Q$.
\end{prop}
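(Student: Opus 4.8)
The goal is to prove \ref{prop:extend-graphs-to-latin-squares}: that the number $\left|\L^{*}\left(G\right)\right|$ of Latin squares $L$ with $G_{Q}\left(L\right)=G$ varies by at most a factor $e^{O\left(n\log^{2}n\right)}$ over all $\left|Q\right|$-regular bipartite graphs $G$ on $\range n \sqcup \range n$. The plan is to count completions of a fixed $G$ to a Latin square via the permanents of a sequence of zero-one matrices, exactly as in the standard proof of \ref{prop:extend-to-latin-squares}, and to show each permanent in the sequence is ``almost'' determined.

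First I would reformulate the count. Write $q=\left|Q\right|$ and list the symbols in $Q$ as $q_1,\dots,q_q$ and the symbols outside $Q$ as $q_{q+1},\dots,q_n$. Given $G$, building a Latin square $L$ with $G_Q(L)=G$ amounts to choosing, one symbol at a time, a perfect matching in the bipartite ``availability'' graph for that symbol: for the symbols in $Q$ the matchings must lie inside $G$ (and be edge-disjoint, which is automatic since $G$ is $q$-regular), and for the symbols outside $Q$ the matchings lie inside the complement-type graph of non-edges of $G$, again avoiding previously used positions. So $\left|\L^{*}\left(G\right)\right| = \prod_{\ell=1}^{n} \operatorname{per} A_\ell$, where $A_\ell$ is the zero-one biadjacency matrix recording, after symbols $q_1,\dots,q_{\ell-1}$ have been placed, which (row, column) positions are still available for symbol $q_\ell$. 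The key structural point is that at step $\ell$ the matrix $A_\ell$ has all row sums and column sums equal to the same value $r_\ell$: when placing the $\ell$-th symbol of $Q$, every row and column still needs $q-\ell+1$ more $Q$-symbols among the edges of $G$, so $r_\ell = q-\ell+1$; and when placing the $m$-th symbol outside $Q$, similarly $r = n-q-m+1$. (One should note $A_\ell$ itself depends on the earlier matching choices, but its row/column sums do not.)

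The engine is then the standard permanent sandwich: the Van der Waerden / Egorychev--Falikman lower bound $\operatorname{per} A_\ell \ge (r_\ell/n)^n n! \ge r_\ell!\,(r_\ell/n)^{n-r_\ell} \cdot (\text{something}\ge 1)$ and the Bregman--Minc upper bound $\operatorname{per} A_\ell \le \prod (r_\ell!)^{n/r_\ell} = (r_\ell!)^{n/r_\ell}$. Taking logarithms, each factor $\log\operatorname{per} A_\ell$ lies in an interval of length $O(n\log n)$ — indeed $\log(r!)^{n/r} - \log\big((r/n)^n n!\big) = O(n\log n)$ uniformly in $r\le n$ by Stirling. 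Crucially these bounds depend on $A_\ell$ only through the common value $r_\ell$, which is the same for $G$ and $G'$. Multiplying over the $n$ steps gives $\log\left|\L^{*}\left(G\right)\right|$ pinned down to within $O(n^2\log n)$ — which is too weak. To get $O(n\log^2 n)$ one must be more careful: for the vast majority of steps $r_\ell$ is large (say $r_\ell \ge n/\log n$), and there the Bregman--Minc and Van der Waerden bounds differ only by a factor $e^{O(n\log n / r_\ell)} = e^{O(\log^2 n)}$ per step, contributing $e^{O(n\log^2 n)}$ in total; only the $O(\log n)$ steps with small $r_\ell$ are handled by the cruder $e^{O(n\log n)}$-per-step bound, again contributing $e^{O(n\log^2 n)}$. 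Comparing the upper bound for $G$ against the lower bound for $G'$ step by step then yields $\left|\L^{*}\left(G\right)\right| / \left|\L^{*}\left(G'\right)\right| \le e^{O(n\log^2 n)}$, uniformly over $Q$.

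The main obstacle is the bookkeeping that makes the permanent bounds applicable: one must verify that at every stage the availability matrix $A_\ell$ is genuinely a nonnegative integer (in fact zero-one) matrix with constant line sums, and that this line-sum value is a function of $\ell$ and $q$ alone, not of which particular matchings were chosen earlier — this is where regularity of $G$ is used, and it is exactly the feature that makes the bounds for $G$ and $G'$ line up term by term. Once that is in place, the rest is the same Stirling-estimate calculation already invoked for \ref{prop:extend-to-latin-squares}, so I would simply remark that \ref{prop:extend-graphs-to-latin-squares} follows by the identical argument, with the roles of ``rows already fixed'' played by ``symbols already placed'' and the roles of ``free rows'' played by ``$Q$-symbols then non-$Q$-symbols.''
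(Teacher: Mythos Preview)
Your approach is essentially the paper's: decompose a Latin square $L$ with $G_Q(L)=G$ into a 1-factorization of $G$ (the matchings for the symbols in $Q$) together with a 1-factorization of the bipartite complement of $G$ (the matchings for the symbols outside $Q$), then sandwich the number of ways to peel off each 1-factor between the Egorychev--Falikman lower bound and the Br\'egman upper bound. The paper phrases this as bounding $\Phi(G)\Phi(\overline G)$ where $\Phi$ counts 1-factorizations, but the content is identical to your sequential-permanent formulation.

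There is, however, a genuine slip in your final estimate. You assert that ``only the $O(\log n)$ steps with small $r_\ell$'' need the crude bound. This is false: the multiset of values $\{r_\ell : 1\le \ell\le n\}$ is exactly $\{1,2,\dots,q\}\cup\{1,2,\dots,n-q\}$, so there are $\Theta(n/\log n)$ steps with $r_\ell < n/\log n$, not $O(\log n)$. With your stated crude bound of $e^{O(n\log n)}$ per such step, the contribution would be $e^{O(n^2)}$, which is far too weak. The repair is not to split into large and small $r_\ell$ at all, but to use the uniform per-step ratio bound
\[
\frac{(r!)^{n/r}}{n!\,(r/n)^{n}}\le \exp\left(O\!\left(\frac{n(\log r + 1)}{r}\right)\right)
\]
(from Stirling) and sum directly:
\[
\sum_{r=1}^{q}\frac{n(\log r + 1)}{r} + \sum_{r=1}^{n-q}\frac{n(\log r + 1)}{r} = O\!\left(n\log^{2} n\right),
\]
since $\sum_{r\le m}(\log r)/r = O(\log^{2} m)$. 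This is exactly what the paper does.

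A minor notational point: the equality $|\L^{*}(G)|=\prod_{\ell}\operatorname{per} A_\ell$ is not literally correct, since $A_\ell$ depends on the matchings chosen at earlier steps. What you actually have, and what suffices, is the pair of inequalities obtained by replacing each $\operatorname{per} A_\ell$ by its minimum (respectively maximum) over all $r_\ell$-regular bipartite zero-one matrices; your parenthetical remark shows you are aware of this, but the equality sign is misleading as written.
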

For completeness, we provide a proof of \ref{prop:extend-graphs-to-latin-squares}.
\begin{proof}
Note that we can interpret a Latin square as a 1-factorization of
$K_{n,n}$ (that is, a proper edge colouring with $n$ labelled colour
classes). The correspondence is that the edge between vertex $i$
in the first part and vertex $x$ in the second part receives colour
$q$ if $L_{i,x}=q$. From this point of view, a Latin square in $\Lext G$
is uniquely defined by a 1-factorization of $G$, and a 1-factorization
of the complement of $G$. Let $\Phi\left(G\right)$ be the number
of 1-factorizations of $G$; it suffices to prove that $\Phi\left(G\right)/\Phi\left(G'\right)\le e^{O\left(n\log^{2}n\right)}$.

Let $\phi\left(G\right)$ be the number of 1-factors (perfect matchings)
of a graph $G$. The Egorychev-Falikman theorem \cite{Ego81,Fal81}
(previously known as the Van der Waerden conjecture) and Br\'egman's
theorem \cite{Bre73} (previously known as Minc's conjecture) give
lower and upper bounds on $\phi\left(G\right)$ for a $d$-regular
bipartite graph $G$:
\[
n!\left(\frac{d}{n}\right)^{n}\le\phi\left(G\right)\le\left(d!\right)^{n/d}.
\]

We can therefore give bounds on the number of ways to choose a 1-factorization
by choosing its 1-factors one-by-one:
\[
\prod_{k=1}^{d}n!\left(\frac{k}{n}\right)^{n}\le\Phi\left(G\right)\le\prod_{k=1}^{d}\left(k!\right)^{n/k}.
\]
Now, Stirling's inequality gives
\[
\frac{\left(k!\right)^{n/k}}{n!\left(k/n\right)^{n}}\le\frac{\left(\Theta\left(\sqrt{k}\left(k/e\right)^{k}\right)\right)^{n/k}}{\sqrt{n}\left(n/e\right)^{n}\left(k/n\right)^{n}}\le\exp\left(O\left(\frac{n\left(\log k+1\right)}{k}\right)\right),
\]
so using the approximation $\sum_{i=1}^{d}1/i=\Theta\left(\log d+1\right)$
for the harmonic series,
\[
\frac{\Phi\left(G\right)}{\Phi\left(G'\right)}\le\prod_{k=1}^{d}\frac{\left(k!\right)^{n/k}}{n!\left(k/n\right)^{n}}=e^{O\left(n\left(\log^{2}d+1\right)\right)}=e^{O\left(n\log^{2}n\right)}
\]
as desired.
\end{proof}
The upshot of \ref{prop:extend-graphs-to-latin-squares} is that $G_{Q}\left(\randL\right)$
is not too far from the uniform distribution on $\Gd{\left|Q\right|}$,
and events that hold with very high probability for a uniformly random
$\randG\in\Gd{\left|Q\right|}$ also hold with very high probability
for $G_{Q}\left(\randL\right)$.

It is possible to obtain discrepancy tail bounds for random regular
(bipartite) graphs using switchings of the type in \cite[Theorem~2.2]{KSVW01}.
Such a bound would nearly provide the result we are after (although
there would be difficulties for very dense graphs). However, at the
range of probabilities we are interested in, regular bipartite graphs
comprise a non-negligible proportion of all bipartite graphs with
the appropriate number of edges, and (modulo an enumeration theorem
for regular bipartite graphs) this enables a simpler approach. Let
$\GGB np$ be the random graph distribution on the vertex set $\range n\sqcup\range n$,
where each of the $n^{2}$ possible edges between the parts are present
with independent probability $p$.
\begin{lem}
\label{lem:Gnp-regular}For any $d$ (potentially depending on $n$),
let $p=d/n$. The probability a random graph $\randB\in\GGB np$ is
$d$-regular is $e^{-O\left(n\log n\right)}$. Also, conditioning
on this event gives the uniform distribution on $\Gd d$.
\end{lem}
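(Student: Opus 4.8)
The plan is to compute the probability that $\randB\in\GGB np$ is $d$-regular directly, using the fact that $\randB$ conditioned on having exactly $m$ edges is uniform over all bipartite graphs with $m$ edges on $\range n\sqcup\range n$. First I would observe that every $d$-regular bipartite graph on $\range n\sqcup\range n$ has exactly $m_0=dn$ edges and occurs with the same probability $p^{m_0}(1-p)^{n^2-m_0}$ under $\GGB np$; since the number of edges is a deterministic function of the (unlabelled-by-count) event, conditioning on $d$-regularity is the same as conditioning on ``$\randB$ has $dn$ edges \emph{and} is $d$-regular'', and within the set of $dn$-edge graphs each is equally likely, so the conditional distribution is uniform on $\Gd d$. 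That disposes of the second assertion immediately.

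For the quantitative claim, write $\Pr(\randB\in\Gd d)=|\Gd d|\,p^{dn}(1-p)^{n^2-dn}$. I would split this as $\Pr(\randB\in\Gd d)=\Pr(e(\randB)=dn)\cdot\frac{|\Gd d|}{\binom{n^2}{dn}}$, where $e(\randB)\sim\Bin(n^2,p)$ with mean exactly $dn$. By a standard local central limit / Stirling estimate, $\Pr(\Bin(n^2,p)=dn)=\Theta\!\big(1/\sqrt{n^2 p(1-p)}\big)$ when $p$ is bounded away from $0$ and $1$, and in the extreme regimes ($d=o(n)$ or $n-d=o(n)$) it is still $e^{-O(n\log n)}$ — in fact never smaller than that, since $\Pr(\Bin(n^2,p)=dn)\ge (n^2+1)^{-1}$ always. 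So the binomial factor is $e^{-O(n\log n)}$, and it remains to show the ratio $|\Gd d|/\binom{n^2}{dn}$ is also $e^{-O(n\log n)}$ (and bounded below by the same). The numerator $|\Gd d|$, the number of $d$-regular bipartite graphs, is counted by an enumeration theorem for regular bipartite graphs (equivalently, $0/1$ matrices with all row and column sums equal to $d$); such a result gives $|\Gd d|=\binom{n}{d}^{2n}e^{-(d-1)/2+o(n)}$-type asymptotics, and in any case $|\Gd d|=e^{\Theta(n\log n)}$ while $\binom{n^2}{dn}=e^{\Theta(n\log n)}$ as well, so their ratio is $e^{O(n\log n)}$ with a matching lower bound. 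Combining, $\Pr(\randB\in\Gd d)=e^{-O(n\log n)}$.

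The main obstacle is the enumeration of $d$-regular bipartite graphs: one needs an asymptotic formula (or at least two-sided exponential bounds) for $|\Gd d|$ that is valid uniformly over all $d$, including the sparse, dense, and intermediate regimes. This is exactly the ``enumeration theorem for regular bipartite graphs'' flagged parenthetically in the text preceding the lemma; I would invoke the known results on permanents of $0/1$ matrices with prescribed line sums (in the spirit of the Egorychev--Falikman and Br\'egman bounds already used for \ref{prop:extend-graphs-to-latin-squares}, or the more precise Canfield--McKay / Liebenau--Wormald enumeration), which pin down $\log|\Gd d|$ to within $o(n\log n)$ — indeed to within $O(n)$ — uniformly in $d$. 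Everything else is a routine Stirling computation, and the uniformity statement ``uniformly over $d$'' follows since the $O(n\log n)$ error never depends on $d$ except through terms that are themselves $O(n\log n)$.
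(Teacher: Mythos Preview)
Your decomposition is exactly the paper's: write $\Pr(\randB\in\Gd d)=\Pr\bigl(e(\randB)=dn\bigr)\cdot\dfrac{|\Gd d|}{\binom{n^{2}}{dn}}$, handle the binomial point probability trivially, and then lower-bound the fraction of $dn$-edge bipartite graphs that are $d$-regular via an enumeration estimate for $|\Gd d|$. The uniformity argument for the conditional distribution is also the same.

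There is, however, a genuine slip in your execution. The assertion that ``$|\Gd d|=e^{\Theta(n\log n)}$ while $\binom{n^{2}}{dn}=e^{\Theta(n\log n)}$, so their ratio is $e^{O(n\log n)}$'' is wrong on two counts. First, for $p=d/n$ bounded away from $0$ and $1$ both quantities are of order $e^{\Theta(n^{2})}$, not $e^{\Theta(n\log n)}$: for instance $\binom{n^{2}}{n^{2}/2}\asymp 2^{n^{2}}$. Second, even if two quantities were both $e^{\Theta(n\log n)}$, that alone says nothing about their ratio. What you actually need is a lower bound on $|\Gd d|$ that matches $\binom{n^{2}}{dn}$ up to a factor $e^{-O(n\log n)}$, uniformly in $d$. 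The sparse-regime formula $|\Gd d|\sim\binom{n}{d}^{2n}e^{-(d-1)/2}$ you allude to does not cover the full range, and Br\'egman/Egorychev--Falikman give bounds on perfect matchings, not directly on $|\Gd d|$.

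The paper resolves this cleanly by quoting the Ordentlich--Roth inequality (stated as \ref{thm:regular-enumeration}): $|\Gd d|\ge\binom{n}{d}^{2n}\bigl(p^{p}(1-p)^{1-p}\bigr)^{n^{2}}$, valid for \emph{all} $d$. A one-line Stirling computation then gives $\dfrac{|\Gd d|}{\binom{n^{2}}{dn}}\ge\bigl(O(p(1-p)n)\bigr)^{-n}\ge e^{-O(n\log n)}$, which is exactly the missing ingredient in your sketch.
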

To prove \ref{lem:Gnp-regular} we will use the following estimate
due to Ordentlich and Roth \cite[Proposition~2.2]{OR00}.
\begin{thm}
\label{thm:regular-enumeration}For any $d$ (potentially depending
on $n$), let $p=d/n$. The number $\left|\Gd d\right|$ of $d$-regular
bipartite graphs on $\range n\sqcup\range n$ is at least
\[
{n \choose d}^{2n}\left(p^{p}\left(1-p\right)^{1-p}\right)^{n^{2}}.
\]
\end{thm}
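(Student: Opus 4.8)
The plan is to route everything through the $\GGB{n}{p}$ model that is about to be used in \ref{lem:Gnp-regular}. Set $p=d/n$. Every $d$-regular bipartite graph on $\range n\sqcup\range n$ has exactly $nd$ edges, so each such graph is obtained from $\randB\in\GGB np$ with probability exactly $p^{nd}(1-p)^{n^2-nd}=\bigl(p^p(1-p)^{1-p}\bigr)^{n^2}$, whence
\[
\bigl|\Gd d\bigr|=\Pr\!\bigl(\randB\in\GGB np\text{ is }d\text{-regular}\bigr)\cdot\bigl(p^p(1-p)^{1-p}\bigr)^{-n^2}.
\]
Writing $q=\binom{n}{d}p^d(1-p)^{n-d}=\Pr(\Bin(n,p)=d)$, one checks $q^{2n}=\binom{n}{d}^{2n}\bigl(p^p(1-p)^{1-p}\bigr)^{2n^2}$, so (multiplying the display by $\bigl(p^p(1-p)^{1-p}\bigr)^{n^2}$) the claimed bound is exactly $\Pr(\randB\text{ is }d\text{-regular})\ge q^{2n}$. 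Now the row sums of $\randB$ are i.i.d.\ $\Bin(n,p)$ and, conditioned on all of them equalling $d$, the rows of $\randB$ are independent uniformly random $d$-subsets of $\range n$. Hence $\Pr(\randB\text{ is }d\text{-regular})=q^{n}\,\Pr(A\mid B)$, where $B$ is the event ``all $n$ row sums equal $d$'' (so $\Pr(B)=q^n$) and $A$ is the event ``all $n$ column sums equal $d$'' (so $\Pr(A)=q^n$ as well). Thus the whole statement reduces to the correlation inequality $\Pr(A\cap B)\ge\Pr(A)\Pr(B)$, i.e.\ $\Pr(A\mid B)\ge\Pr(A)$.

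This inequality is the main obstacle, and it is not immediate: $A$ and $B$ are not monotone events (each says ``certain line sums are $\ge d$ and $\le d$''), so the FKG inequality does not apply directly. I would try to prove it by revealing the rows of $\randB$ one at a time under the conditioning $B$: after $k$ rows the column-deficiency vector $D_k$ lies in $\{0,\dots,n-k\}^{n}$ with $\sum_j D_k(j)=(n-k)d$, and $A$ is the event that $D_n$ is identically zero; the goal is to show that the conditional probability of ``staying on track'' at each step is never smaller than in the unconditioned benchmark, where the column sums are independent $\Bin(n,p)$. (An alternative is to write $A=A^{+}\cap A^{-}$ and $B=B^{+}\cap B^{-}$ as intersections of an increasing and a decreasing event and appeal to a four-functions / Ahlswede--Daykin type argument.) The delicate point is getting the constant exactly right for \emph{all} $n$, with no slack.

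For a self-contained bound that is a bit weaker but still more than enough for \ref{lem:Gnp-regular}, one can avoid the correlation inequality by a permanent argument. A $d$-regular bipartite graph $G$ is a sum of $d$ pairwise edge-disjoint perfect matchings, so $P(n,d):=\#\{\text{ordered }d\text{-tuples of pairwise-disjoint perfect matchings of }K_{n,n}\}$ equals $\sum_{G\in\Gd d}c(G)$, where $c(G)$ counts the ordered proper $d$-edge-colourings of $G$. Peeling off perfect matchings one at a time and using the Egorychev--Falikman bound $\phi(H)\ge n!(r/n)^n$ for $r$-regular bipartite $H$ gives both $c(G)\ge\prod_{r=1}^{d}n!(r/n)^n$ for every $G$ and $P(n,d)\ge\prod_{m=n-d+1}^{n}n!(m/n)^n$, while Br\'egman's bound gives $c(G)\le\prod_{r=1}^{d}(r!)^{n/r}$ for every $G$; hence $\bigl|\Gd d\bigr|\ge P(n,d)/\max_G c(G)\ge \prod_{m=n-d+1}^{n}n!(m/n)^n\big/\prod_{r=1}^{d}(r!)^{n/r}$, and a Stirling estimate shows this matches $\binom{n}{d}^{2n}\bigl(p^p(1-p)^{1-p}\bigr)^{n^2}$ up to a $\exp(O(n\log^2 n))$ factor --- which would anyway be absorbed into the error term of \ref{prop:extend-graphs-to-latin-squares}. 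Recovering the sharp form with no loss is exactly the estimate of Ordentlich and Roth; I expect the cleanest route there is the coupling argument of the previous paragraph, carried out with care.
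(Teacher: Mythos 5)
The paper does not prove \ref{thm:regular-enumeration}; it is quoted from Ordentlich and Roth \cite{OR00}. Your first reduction is exactly the right first step: passing to $\randB\in\GGB np$, writing $\left|\Gd d\right|=\Pr\left(\randB\text{ is }d\text{-regular}\right)\cdot\left(p^p(1-p)^{1-p}\right)^{-n^2}$, and noting that $\Pr(A)=\Pr(B)=q^n$ for the column- and row-regularity events $A,B$ turns the theorem into the correlation inequality $\Pr(A\cap B)\ge\Pr(A)\Pr(B)$; the identity $q^{2n}=\binom{n}{d}^{2n}\left(p^p(1-p)^{1-p}\right)^{2n^2}$ that this uses is correct. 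But that correlation inequality \emph{is} the theorem --- it is precisely the nontrivial content of \cite{OR00} --- and you explicitly leave it unproved. The events $A$ and $B$ are not monotone, so FKG does not apply directly, and neither of the two routes you gesture at (sequential row revelation, or an Ahlswede--Daykin decomposition into monotone pieces) is actually carried out or shown to close with the exact constant. As a proof of the stated theorem this is a genuine gap, not a technicality.

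Your permanent-based fallback, by contrast, is a complete and correct argument, and the remark that it suffices for the paper is a good one. Peeling perfect matchings from $K_{n,n}$ via Egorychev--Falikman gives $P(n,d)\ge\prod_{r=n-d+1}^{n}n!(r/n)^n$, Br\'egman gives $c(G)\le\prod_{r=1}^{d}(r!)^{n/r}$ uniformly over $G\in\Gd d$, and $\left|\Gd d\right|\ge P(n,d)/\max_G c(G)$; a Stirling computation using $\sum_{r\le d}(\log r)/r=O(\log^2 d)$ --- the same estimate used in the proof of \ref{prop:extend-graphs-to-latin-squares} --- shows this recovers the stated bound up to a factor $e^{O(n\log^2 n)}$. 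Since \ref{thm:regular-enumeration} enters only through \ref{lem:Gnp-regular}, and the final chain of estimates in \ref{sec:discrepancy} already carries an $e^{O(n\log^2 n)}$ factor from \ref{prop:extend-graphs-to-latin-squares}, the weaker bound changes nothing downstream. So you have a valid self-contained substitute for the paper's \emph{use} of the theorem, but not a proof of the theorem as stated; for that you would either cite \cite{OR00} as the paper does, or actually establish the correlation inequality.
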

We remark that a precise asymptotic estimate for $\left|\Gd d\right|$
will very soon become available, due to some soon-to-be-published
developments by Liebenau and Wormald \cite{LW16} and independently
Isaev and McKay \cite{IM17}.
\begin{proof}[Proof of \ref{lem:Gnp-regular}]
The probability $\randB$ has exactly $dn=pn^{2}$ edges is 
\[
{n^{2} \choose pn^{2}}p^{pn^{2}}\left(1-p\right)^{\left(1-p\right)n^{2}}\asymp\frac{1}{n\sqrt{p\left(1-p\right)}}=e^{-o\left(n\right)}.
\]
(here we used Stirling's approximation). By symmetry, each graph with
$dn$ edges is equally likely. By \ref{thm:regular-enumeration},
the fraction of such graphs which are $d$-regular is
\[
{n \choose pn}^{2n}\left(p^{p}\left(1-p\right)^{1-p}\right)^{n^{2}}\left/{n^{2} \choose pn^{2}}\right.=\left(O\left(p\left(1-p\right)n\right)\right)^{-n}\ge e^{-O\left(n\log n\right)}.\tag*{\qedhere}
\]
\end{proof}
Now, discrepancy in $\GGB np$ (for $p=\left|Q\right|/n$) is very
easy to study. Indeed, for $\randB\in\GGB np$ the law of $e_{\randB}\left(I,X\right)$
is the binomial distribution $\Bin\left(\left|I\right|\left|X\right|,p\right)$
with mean $\left|I\right|\left|X\right|p=\vol T/n$. Let $\randG\in\Gd{\left|Q\right|}$
be a uniformly random $\left|Q\right|$-regular bipartite graph. By
a binomial large deviation inequality (for example \cite[Theorem~2.1]{JLR00}),
\ref{prop:extend-graphs-to-latin-squares} and \ref{lem:Gnp-regular},
we have
\begin{align*}
\Pr\left(\left|N_{T}\left(\randL\right)-\frac{\vol T}{n}\right|>t\right) & =\Pr\left(\left|e_{G_{Q}\left(\randL\right)}\left(I,X\right)-\frac{\vol T}{n}\right|>t\right)\\
 & \le\Pr\left(\left|e_{\randG}\left(I,X\right)-\frac{\vol T}{n}\right|>t\right)e^{O\left(n\log^{2}n\right)}\\
 & \le\Pr\left(\left|e_{\randB}\left(I,X\right)-\frac{\vol T}{n}\right|>t\right)e^{O\left(n\log^{2}n+n\log n\right)}\\
 & =\exp\left(-\Omega\left(\frac{t^{2}}{\vol T/n+t}\right)+O\left(n\log^{2}n\right)\right).
\end{align*}
If $t$ is a large multiple of $\sqrt{\vol T}\log n+n\log^{2}n$,
then this probability is $e^{-\Omega\left(n\log^{2}n\right)}=o\left(8^{-n}\right)$.

\section{Concluding remarks}

We have shown that the number of intercalates $\randN$ in a uniformly
random $n\times n$ Latin square a.a.s. satisfies $\left(1-o\left(1\right)\right)n^{2}/4\le\randN\le fn^{2}$,
for any $f\to\infty$, and we showed that $\left(1+o\left(1\right)\right)n^{2}/4\le\E\randN\le\left(1+o\left(1\right)\right)n^{2}/2$.
In doing so we obtained an exponentially-decaying estimate for the
lower tail of $\randN$ and an exponential upper-tail estimate for
the number of intercalates in two fixed rows. We also proved that
random Latin squares typically have relatively low discrepancy.

There are a number of related problems that remain open. First, there
is the task of reducing the a.a.s. upper bound on $\randN$ to $\left(1+o\left(1\right)\right)n^{2}/4$
or at least to $O\left(n^{2}\right)$. The most obvious way of approaching
this would be to imitate our proof of the lower bound, and show that
for some $k$ satisfying $\sqrt{n}\log n\ll k$, with very high probability
a random $k\times n$ Latin rectangle does not have too many intercalates.
The tools from \cite{MW99} can accomplish this conditioned on the
nonexistence of certain ``problematic configurations'' of intercalates,
but showing these configurations are unlikely appears to be a surprisingly
difficult task.

Second, there is the problem of understanding the existence and number
of substructures other than intercalates in random Latin squares.
McKay and Wanless \cite{MW99} conjecture that the number of $3\times3$
Latin subsquares should have expectation $\Theta\left(1\right)$,
and similar logic would suggest that a.a.s. there are no Latin subsquares
of larger order. A proof of either of these facts would be interesting.

Third, there is the task of making further progress towards \ref{conj:linial-luria}.
Even a slight improvement over our \ref{thm:weak-discrepancy} would
be interesting, because such an improvement would have to avoid the
error introduced by the permanent estimates in \ref{prop:extend-to-latin-squares,prop:extend-graphs-to-latin-squares}.

Finally, it would be interesting to prove analogous results for more
general types of random designs, such as Latin cubes or Steiner triple
systems. See for example the recent work of Kwan \cite{Kwa16} on
random Steiner triple systems.

\begin{appendices}
\crefalias{section}{appsec}

\end{appendices}
\end{document}